\documentclass[11pt]{amsart}
\usepackage[a4paper,margin=1in]{geometry}
\usepackage{amsmath,amssymb,amsthm,mathtools,bm}
\usepackage{booktabs}
\usepackage{enumitem}
\usepackage{microtype}
\usepackage[colorlinks=true,linkcolor=blue,citecolor=blue,urlcolor=blue]{hyperref}
\usepackage[nameinlink,noabbrev]{cleveref}
\hypersetup{pdftitle={Dual Geometry of Spherical Designs: Polarity, Self-Polar Rigidity, and Quadrature Structure},pdfauthor={Congpei An}}

\newtheorem{theorem}{Theorem}[section]
\newtheorem{proposition}[theorem]{Proposition}
\newtheorem{lemma}[theorem]{Lemma}
\newtheorem{corollary}[theorem]{Corollary}
\theoremstyle{definition}
\newtheorem{definition}[theorem]{Definition}
\newtheorem{problem}[theorem]{Problem}
\theoremstyle{remark}
\newtheorem{remark}[theorem]{Remark}

\newcommand{\R}{\mathbb R}
\newcommand{\Sph}{\mathbb S}
\newcommand{\Id}{\mathrm I}
\newcommand{\conv}{\operatorname{conv}}
\newcommand{\Verts}{\operatorname{vert}}
\newcommand{\rank}{\operatorname{rank}}
\newcommand{\diag}{\operatorname{diag}}

\newcommand{\dd}{\,\mathrm d}
\newcommand{\norm}[1]{\left\lVert #1\right\rVert}

\newcommand{\ip}[2]{\left\langle #1,#2\right\rangle}
\newcommand{\dH}{d_{\mathrm H}}

\newcommand{\Area}{\operatorname{Area}}

\newcommand{\HS}{\mathrm{HS}}

\title[Dual Geometry of Spherical Designs]{Dual Geometry of Spherical Designs:\\
Polarity, Self-Polar Rigidity, and Quadrature Structure}
\author{Congpei An}
\date{August 2026}

\begin{document}

\begin{abstract}
Let $X=\{x_1,\ldots,x_N\}\subset\Sph^{d-1}$ be a spherical $t$-design, $t\ge2$, and let $P_X\subset\R^d$ be its Minkowski polytope, normalized so that the origin lies in its interior.  Thus the $x_i$ are the outer facet normals of $P_X$ and all facets have equal $(d-1)$-area.  We study the geometry obtained after polarizing this realization.  If $h_i=h_{P_X}(x_i)$, then
\[
 P_X^\circ=\conv\{x_i/h_i:1\le i\le N\},
\]
so the original design is exactly the radial projection of the polar vertex set.  Homogeneous moments therefore admit an exact degree-dependent transfer formula, while Hausdorff sphericity of $P_X$ gives quantitative control of the unweighted moments of $P_X^\circ$.  In dimension three, the known bound $d_{\rm H}(P_X,B)=O(t^{-1/2})$ yields asymptotic isotropy of polar vertex moments throughout the growing range $m=o(\sqrt t)$.

The main part of the paper concerns self-polarity.  If $P_X^\circ=cUP_X$ with $U\in O(d)$, we derive a structured nonnegative slack factorization
\[
 A=c\,hh^T-X^TU^TX,
 \qquad \rank A=d+1,
\]
whose zero pattern is exactly the facet--vertex incidence relation.  For node-transitive designs all support numbers are equal, and the self-polar incidence level is the inradius-to-circumradius ratio.  Combining this observation with the one-dimensional moment problem behind the Fazekas--Levenshtein covering bound gives a quadrature-combinatorics rigidity theorem: the incidence level is bounded below by an explicit Gaussian or Gauss--Radau node, and equality forces every row of twisted inner products to have precisely the corresponding quadrature nodes and weights.  Thus the quadrature weights become incidence frequencies: at equality, the integers $N\lambda_k$ are the corresponding row multiplicities.  We also prove a quantitative near-equality theorem.  If $\delta=r/R-\eta_{t,d}\ge0$ is small, then every twisted inner-product row is $O_{d,t}(\sqrt\delta)$-close in $W_1$ to the extremal Gaussian or Gauss--Radau quadrature measure, and its entries cluster quantitatively around the quadrature nodes.  In particular the proportion of vertices with normalized slack $O_{d,t}(\sqrt\delta)$ from a facet differs from the top quadrature weight by at most $O_{d,t}(\sqrt\delta)$.  This yields an arithmetic stability gap in terms of the distances of $N\lambda_k$ from the integers.  We obtain sharp low-degree consequences, prove that a node-transitive self-polar Minkowski design polytope in dimension three is necessarily a tetrahedron, and show that among regular polytopal designs in dimensions at least three the only self-polar examples are the regular simplex and the exceptional $24$-cell.  The latter realizes the degree-five Gaussian equality pattern $(1/4,1/2,1/4)$.
\end{abstract}

\keywords{spherical design; Minkowski polytope; polar duality; self-polar polytope; slack matrix; covering radius; Gaussian quadrature; 24-cell; convex geometry}
\subjclass[2020]{Primary 52B15, 05B30; Secondary 52B05, 05B40, 33C45}
\maketitle

\section{Introduction}

A spherical $t$-design is a finite subset of the sphere whose equal-weight empirical measure agrees with normalized spherical measure on every polynomial of degree at most $t$.  The definition is analytic, but it has a direct convex-geometric realization.  For $t\ge2$, the first two design moments give the balance and nondegeneracy hypotheses of Minkowski's existence theorem.  Consequently, a spherical design $X\subset\Sph^{d-1}$ determines, up to translation, a convex polytope $P_X$ whose outer facet normals are the nodes of $X$ and whose facets all have the same area.  In dimension three this realization was developed quantitatively in \cite{An2026}: the surface-area measure of $P_X$ agrees with that of the ball through degree $t$, and after Steiner normalization one has
\[
 \dH(P_X,B)=O(t^{-1/2}).
\]
The construction also extends to higher dimensions, with the corresponding inverse-Minkowski stability scale recorded in \cite{An2026}.

The purpose of the present paper is to investigate a different geometric operation: \emph{polar duality}.  The basic formula is elementary but structurally decisive.  If
\[
 P_X=\bigcap_{i=1}^N\{y:\ip{x_i}{y}\le h_i\},
 \qquad h_i=h_{P_X}(x_i)>0,
\]
then
\[
 P_X^\circ=\conv\left\{\frac{x_1}{h_1},\ldots,\frac{x_N}{h_N}\right\}.
\]
Thus a single spherical design simultaneously encodes the facet-normal geometry of $P_X$ and the radial vertex geometry of its polar.  The first question is quantitative: how much of the spherical moment structure survives when the polar vertices are used without radial normalization?  The second question is rigid: what extra constraints arise if the Minkowski polytope is self-polar, namely
\[
 P_X^\circ=cUP_X,
 \qquad c>0,\quad U\in O(d)?
\]

Our first group of results answers the moment-transfer question.  For every homogeneous polynomial $H_m$ of degree $m\le t$, the polar vertices $v_i=x_i/h_i$ satisfy
\[
 \frac1N\sum_{i=1}^Nh_i^mH_m(v_i)
 =\int_{\Sph^{d-1}}H_m(u)\dd\sigma(u).
\]
This is an exact degree-dependent weighted identity.  More significantly, if $\dH(P_X,B)\le\delta<1$, the unweighted polar moment tensor satisfies
\[
 \left\|\frac1N\sum_i v_i^{\otimes m}
      -\int u^{\otimes m}\dd\sigma(u)\right\|_{\HS}
 \le (1-\delta)^{-m}-1.
\]
In the three-dimensional setting of \cite{An2026}, $\delta=O(t^{-1/2})$; hence all polar moments in the growing range $m=o(\sqrt t)$ become asymptotically isotropic.  This gives a direct transfer from inverse Minkowski stability to a quantitative statement about the vertices of the polar body.

The self-polar case exposes a sharper structure.  After matching polar vertices with the vertices of $cUP_X$, the vertex corresponding to $x_j$ is
\[
 y_j=\frac{1}{c h_j}U^Tx_j.
\]
Hence the radial vertex directions of $P_X$ form the rotated spherical $t$-design $U^TX$.  More importantly, the matrix
\[
 A_{ij}=c h_i h_j-\ip{Ux_i}{x_j}
\]
is nonnegative, has rank $d+1$, and has zeros precisely at facet--vertex incidences.  In matrix form,
\[
 A=c\,hh^T-X^TU^TX,
\]
where $X$ is the $d\times N$ matrix with columns $x_i$.  This identifies a spherical-design Gram-type matrix inside the slack geometry of a self-polar polytope.  Degree two exactness further implies that the twisted Gram term has exactly $d$ nonzero singular values, all equal to $N/d$.

For node-transitive designs, the geometry simplifies dramatically.  Minkowski uniqueness forces all support numbers to equal a common value $h$.  Self-polarity then makes every vertex lie on a common sphere of radius $R=(ch)^{-1}$, while every facet is tangent to the sphere of radius $r=h$.  The common incidence level is therefore
\[
 \alpha:=ch^2=\frac rR,
\]
and for every $i$,
\[
 \alpha=\max_j\ip{Ux_i}{x_j}.
\]
At this point a classical covering-radius bound for spherical designs becomes a new constraint on self-polar geometry.  We give a short self-contained proof of the relevant one-dimensional moment theorem, equivalent to the Fazekas--Levenshtein bound \cite{FazekasLevenshtein1995}.  If $t=2n+1$, the lower threshold $\eta_{t,d}$ is the largest Gaussian node for the spherical marginal measure; if $t=2n$, it is the largest Gauss--Radau node with fixed endpoint $-1$.  We obtain
\[
 \boxed{\frac rR=\alpha\ge\eta_{t,d}.}
\]
Equality is rigid: for every facet row, the numbers $\ip{Ux_i}{x_j}$ are exactly the corresponding Gaussian or Gauss--Radau nodes, and their multiplicities are $N$ times the quadrature weights.  Thus a continuous quadrature rule controls discrete face cardinalities.  The covering threshold itself is classical; the new point here is its appearance as a self-polar facet--vertex incidence threshold and its coupling to the Minkowski realization.

We then establish a quantitative stability theorem around this equality regime.  Put $\delta=r/R-\eta_{t,d}$.  For fixed $d$ and $t$, if $\delta$ is small, every empirical row measure
\[
 \nu_i=\frac1N\sum_{j=1}^N\delta_{\ip{Ux_i}{x_j}}
\]
is $O_{d,t}(\sqrt\delta)$-close in $W_1$ to the extremal Gaussian or Gauss--Radau rule.  More precisely, the row entries have mean-square distance $O_{d,t}(\delta)$ from the quadrature node set, their cluster multiplicities differ from $N\lambda_k$ by $O_{d,t}(N\sqrt\delta)$, and the number of vertices with normalized facet slack at most $O_{d,t}(\sqrt\delta)$ differs from $N\lambda_+$ by the same order.  Consequently
\[
 \delta\ge \frac{c_{d,t}}{N^2}
 \max_k \operatorname{dist}(N\lambda_k,\mathbb Z)^2.
\]
Thus the exact integrality constraint at equality persists as a quantitative arithmetic obstruction to near equality.

Two examples are especially revealing.  For $t=2$, $\eta_{2,d}=1/d$, and equality forces $N=d+1$ and $X$ to be a regular simplex.  For $t=5$ and $d=4$, $\eta_{5,4}=1/\sqrt2$ and the Gaussian weights are $(1/4,1/2,1/4)$.  The regular $24$-cell realizes this pattern: every relevant facet row contains $6$, $12$, and $6$ inner products at $1/\sqrt2$, $0$, and $-1/\sqrt2$, respectively.  The equality therefore encodes the six vertices of each octahedral facet.

We also derive two combinatorial rigidity consequences.  If $d\ge3$ and a self-polar Minkowski design polytope is simple, then it is necessarily a simplex and the design has strength at most two.  In dimension three, node-transitivity alone is enough: a node-transitive self-polar Minkowski design polytope has equally many vertices and facets and is vertex-transitive; Euler's formula then forces four vertices, hence a tetrahedron.  Thus no node-transitive spherical $t$-design with $t\ge3$ can produce a self-polar Minkowski polytope in $\R^3$.

\paragraph{Relation to existing work.}
Self-polar polytopes, their realizations, and their facial structure have been studied systematically; see Jensen \cite{Jensen2021}.  The relation between self-duality and structured (in particular symmetric or positive-semidefinite, after suitable scaling) slack matrices has also been developed for polyhedral cones by Gouveia--Louren\c{c}o \cite{GouveiaLourenco2023}.  Our slack theorem below uses a different additional input: the Minkowski realization of a spherical design forces the explicit rank-one-minus-twisted-tight-frame decomposition $A=c hh^T-X^TU^TX$.  The covering bound used below goes back to Fazekas--Levenshtein \cite{FazekasLevenshtein1995}, and its connection with spherical polarization is developed in \cite{BoyvalenkovEtAl2023}.  For odd strength, Borodachov \cite{Borodachov2024} characterizes spherical designs attaining the Fazekas--Levenshtein covering bound and relates equality to Gauss--Gegenbauer quadrature, including arithmetic restrictions on the quadrature weights; that paper also reviews the corresponding strongly sharp even-strength equality regime.  We therefore do not claim novelty for the one-dimensional equality mechanism itself.  Recent work of Ehler \cite{Ehler2025} uses vertices and edges of dual pairs of regular polytopes to construct hybrid spherical designs, while Waldron \cite{Waldron2025} studies Gramian formulations of weighted spherical designs.  Our viewpoint is different: we first reconstruct a convex polytope from the \emph{surface-area measure} of a spherical design, then polarize that specific Minkowski realization, and finally use self-polarity to couple design moments, slack matrices, quadrature nodes, and facet--vertex incidences.  Accordingly, the homogeneous transfer identity below is best viewed as a geometric specialization.  The principal new structure lies in the self-polar slack factorization, the conversion of quadrature data into facet--vertex incidence data, and the quantitative near-equality theorem, which turns a small covering-threshold defect into stability of the entire row distribution and a near-incidence statement.

The paper is organized as follows.  \Cref{sec:prelim} fixes the spherical-design and polarity framework.  \Cref{sec:moment} proves exact and quantitative polar moment transfer.  \Cref{sec:selfpolar} develops self-polar design polytopes and the structured slack matrix.  \Cref{sec:transitive} treats node-transitive designs and gives the inradius/circumradius interpretation.  \Cref{sec:quadrature} proves the moment threshold and the quadrature-combinatorics rigidity theorem.  \Cref{sec:stability} develops quantitative near-equality stability and the arithmetic gap theorem.  \Cref{sec:rigidity} gives low-degree and three-dimensional rigidity.  \Cref{sec:regular} treats regular examples, including the $24$-cell.  We conclude with open problems suggested by the new duality framework.

\section{Preliminaries: designs, Minkowski polytopes, and polarity}
\label{sec:prelim}

Let $\sigma$ be normalized surface measure on $\Sph^{d-1}$ and let
\[
 \omega_{d-1}=\Area(\Sph^{d-1}).
\]
A finite set $X=\{x_1,\ldots,x_N\}\subset\Sph^{d-1}$ is a spherical $t$-design if
\begin{equation}\label{eq:design}
 \frac1N\sum_{i=1}^Np(x_i)
 =\int_{\Sph^{d-1}}p(u)\dd\sigma(u)
\end{equation}
for every polynomial of total degree at most $t$ restricted to the sphere.  See Delsarte--Goethals--Seidel \cite{DGS1977}.

The first two moments, valid for every $t\ge2$, are
\begin{equation}\label{eq:firstsecond}
 \frac1N\sum_{i=1}^Nx_i=0,
 \qquad
 \frac1N\sum_{i=1}^Nx_ix_i^T=\frac1d\Id_d.
\end{equation}
Let $\mu_X=N^{-1}\sum_i\delta_{x_i}$.

\begin{theorem}[Minkowski realization]\label{thm:minkowski}
Let $X\subset\Sph^{d-1}$ be a spherical $t$-design, $t\ge2$.  There exists a full-dimensional convex polytope $P_X\subset\R^d$, unique up to translation, such that
\begin{equation}\label{eq:area-measure}
 S_{P_X}=\omega_{d-1}\mu_X
 =\frac{\omega_{d-1}}{N}\sum_{i=1}^N\delta_{x_i}.
\end{equation}
Thus $P_X$ has exactly $N$ facets, the $i$th outer unit facet normal is $x_i$, and every facet has $(d-1)$-area $\omega_{d-1}/N$.
\end{theorem}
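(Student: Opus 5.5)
The plan is to reduce the statement to the classical Minkowski existence and uniqueness theorem for polytopes. That theorem states the following. Let $u_1,\ldots,u_N$ be pairwise distinct unit vectors spanning $\R^d$, and let $a_1,\ldots,a_N>0$ satisfy $\sum_i a_iu_i=0$. Then there is a full-dimensional polytope, unique up to translation, whose outer unit facet normals are exactly the $u_i$ and whose corresponding facet areas are the $a_i$. Equivalently, a finite positive measure on $\Sph^{d-1}$ is the surface-area measure of a convex body if and only if it is centered and not concentrated on any great subsphere. So the work consists of checking these hypotheses for the measure $\omega_{d-1}\mu_X$, taking $u_i=x_i$ and $a_i=\omega_{d-1}/N$.

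\textbf{Verifying the hypotheses.} The points of $X$ are distinct by definition of a finite set, so the measure has $N$ atoms of equal positive mass. The balance condition $\sum_i a_ix_i=(\omega_{d-1}/N)\sum_i x_i=0$ is the first-moment identity in \eqref{eq:firstsecond}, which uses only $t\ge1$. For nondegeneracy, I will show that $X$ is not contained in any hyperplane through the origin. Suppose $\ip{w}{x_i}=0$ for all $i$ and some unit $w$. The second-moment identity in \eqref{eq:firstsecond}, which uses $t\ge2$, then gives
\[
\frac1d=w^T\Bigl(\frac1N\sum_i x_ix_i^T\Bigr)w=\frac1N\sum_i\ip{w}{x_i}^2=0,
\]
a contradiction. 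Hence the $x_i$ span $\R^d$, and the measure is not concentrated on a great subsphere.

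\textbf{Reading off the conclusions.} Minkowski's theorem now yields $P_X$ with $S_{P_X}=\omega_{d-1}\mu_X$, unique up to translation. For a polytope, the surface-area measure is $\sum_F \Area_{d-1}(F)\,\delta_{n_F}$, summed over the facets $F$ with outer unit normals $n_F$. Comparing atoms shows that the facets are in bijection with the points $x_i$, each with area $\omega_{d-1}/N$, so there are exactly $N$ facets. Full-dimensionality is part of Minkowski's conclusion; it also follows because the normals span $\R^d$ and the total facet area is positive.

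The only delicate point is the nondegeneracy hypothesis, and that is exactly where $t\ge2$ enters. Everything else is a citation of Minkowski's theorem; a standard reference is Schneider, \emph{Convex Bodies}, Thm.~8.2.1 and Thm.~8.1.1 for uniqueness. If a self-contained route were wanted, one could use the variational proof: maximize the volume of $\{y:\ip{x_i}{y}\le h_i\}$ subject to $\sum_i a_ih_i=1$, $h\ge0$. The Lagrange conditions, combined with $\partial\,\mathrm{vol}/\partial h_i=\Area(F_i)$, give the required areas, and the spanning property guarantees boundedness and attainment of the maximum. For uniqueness, one would use Minkowski's inequality with its equality case, which I would cite rather than reprove.
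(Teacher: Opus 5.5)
Your proposal is correct and follows the paper's proof. You get balance from the first-moment identity and nondegeneracy from the second-moment identity via the same computation $\frac1N\sum_i\ip{w}{x_i}^2=\frac1d|w|^2$, then cite Minkowski's existence and uniqueness theorem (Schneider) and read off the facets and their areas from the atoms of $S_{P_X}$; the variational sketch at the end is optional and not needed.
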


\begin{proof}
The first identity in \eqref{eq:firstsecond} is the balance condition in Minkowski's theorem.  The second implies that the support of $\mu_X$ is not contained in any great subsphere: if $a\ne0$ and $\ip{a}{x_i}=0$ for all $i$, then
\[
 0=\frac1N\sum_i\ip{a}{x_i}^2=\frac1d|a|^2,
\]
a contradiction.  Minkowski's existence theorem therefore gives a full-dimensional convex body with surface area measure \eqref{eq:area-measure}, unique up to translation.  Finite support makes the body a polytope, and each positive atom corresponds to one facet.  See Schneider \cite{Schneider2014}.  \qedhere
\end{proof}

We use the Steiner-normalized representative $s(P_X)=0$ whenever a canonical translation is needed.  Since the Steiner point of a full-dimensional convex body belongs to its interior, this normalization guarantees
\[
 0\in\operatorname{int}P_X.
\]
All polar bodies below are taken with respect to this origin unless otherwise stated.

For a convex body $P$ containing the origin in its interior, its polar is
\[
 P^\circ=\{z\in\R^d:\ip{z}{y}\le1\text{ for every }y\in P\}.
\]
If $P$ has the irredundant half-space representation
\begin{equation}\label{eq:Hrep}
 P=\bigcap_{i=1}^N\{y:\ip{x_i}{y}\le h_i\},
 \qquad h_i>0,
\end{equation}
then standard polarity gives
\begin{equation}\label{eq:polarvertices}
 P^\circ=\conv\left\{\frac{x_i}{h_i}:1\le i\le N\right\}.
\end{equation}
Each $x_i/h_i$ is a vertex because \eqref{eq:Hrep} is irredundant.

For $P=P_X$, we write
\[
 h_i:=h_P(x_i),
 \qquad
 v_i:=\frac{x_i}{h_i}\in\Verts(P^\circ).
\]
Thus
\begin{equation}\label{eq:radialprojection}
 \frac{v_i}{|v_i|}=x_i.
\end{equation}
In particular, the spherical design is exactly the radial projection of the vertices of the polar Minkowski polytope.

\section{Polar vertices and moment transfer}
\label{sec:moment}

\subsection{Exact homogeneous transfer}

\begin{theorem}[Polar homogeneous moment transfer]\label{thm:graded}
Let $X\subset\Sph^{d-1}$ be a spherical $t$-design and $P=P_X$ as above.  For every integer $0\le m\le t$ and every homogeneous polynomial $H_m$ of degree $m$,
\begin{equation}\label{eq:graded}
 \boxed{
 \frac1N\sum_{i=1}^N h_i^mH_m(v_i)
 =\int_{\Sph^{d-1}}H_m(u)\dd\sigma(u).}
\end{equation}
Equivalently, in the $m$th symmetric tensor power,
\begin{equation}\label{eq:tensorgraded}
 \boxed{
 \frac1N\sum_{i=1}^N h_i^m v_i^{\otimes m}
 =M_{d,m}:=\int_{\Sph^{d-1}}u^{\otimes m}\dd\sigma(u).}
\end{equation}
\end{theorem}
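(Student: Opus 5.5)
The identity is essentially a restatement of the design property \eqref{eq:design}, using homogeneity to move between $x_i$ and the polar vertex $v_i$. The plan is to rewrite the left-hand side of \eqref{eq:graded} in terms of the nodes $x_i$ and then invoke the design identity for the single polynomial $H_m$.

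\emph{Step 1 (homogeneity).} By definition, $v_i=x_i/h_i$ with $h_i=h_{P_X}(x_i)>0$; positivity holds because the Steiner normalization puts $0\in\operatorname{int}P_X$. Since $H_m$ is homogeneous of degree $m$,
\[
 H_m(x_i)=H_m(h_iv_i)=h_i^mH_m(v_i).
\]
Hence the left-hand side of \eqref{eq:graded} equals $N^{-1}\sum_i H_m(x_i)$. The case $m=0$ is trivial, since both sides equal the constant.

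\emph{Step 2 (design identity).} Because $m\le t$, the restriction of $H_m$ to $\Sph^{d-1}$ is a polynomial of degree at most $t$. Then \eqref{eq:design} gives $N^{-1}\sum_iH_m(x_i)=\int H_m\,d\sigma$, which proves \eqref{eq:graded}.

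\emph{Step 3 (tensor form).} For \eqref{eq:tensorgraded}, pair both sides with an arbitrary symmetric $m$-tensor $T$, or equivalently with $w^{\otimes m}$ for arbitrary $w\in\R^d$. The map $y\mapsto\ip{T}{y^{\otimes m}}$ is a homogeneous polynomial of degree $m$, and every such polynomial arises this way. Applying \eqref{eq:graded} to it gives
\[
 \ip{T}{\tfrac1N\textstyle\sum_i h_i^mv_i^{\otimes m}}=\ip{T}{M_{d,m}}.
\]
Both tensors are symmetric, so testing against all symmetric $T$ determines them. The two statements are therefore equivalent.

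There is no real obstacle here. The only points needing care are the positivity of $h_i$, which comes from the interior-origin normalization and makes $v_i$ well defined, and the identification of symmetric tensors with homogeneous polynomials in Step 3 via polarization. Notably, the Minkowski geometry beyond $h_i>0$ plays no role; the content is just that the weights $h_i^m$ exactly undo the radial scaling.
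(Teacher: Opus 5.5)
Your proof is correct and follows the paper's argument: homogeneity turns $h_i^mH_m(v_i)$ into $H_m(x_i)$, the design identity \eqref{eq:design} gives \eqref{eq:graded}, and testing against arbitrary symmetric $m$-linear forms yields the tensor form \eqref{eq:tensorgraded}. Your added remarks on $h_i>0$ (from the interior-origin normalization) and on identifying symmetric tensors with homogeneous polynomials only spell out details the paper leaves implicit.
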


\begin{proof}
Since $v_i=x_i/h_i$ and $H_m$ is homogeneous,
\[
 h_i^mH_m(v_i)=H_m(x_i).
\]
Summing and applying the design identity \eqref{eq:design} proves \eqref{eq:graded}.  Testing \eqref{eq:graded} against arbitrary symmetric $m$-linear forms gives \eqref{eq:tensorgraded}.  \qedhere
\end{proof}

\begin{remark}
The weight $h_i^m$ depends on the degree.  Thus \cref{thm:graded} should not be confused with a single weighted cubature rule exact in all degrees simultaneously.  It is a graded identity created by the radial rescaling inherent in polarity.
\end{remark}

\subsection{Quantitative unweighted moments}

Define the unweighted polar vertex moment tensor
\[
 \mathcal V_m(P^\circ):=\frac1N\sum_{i=1}^Nv_i^{\otimes m}.
\]
We use the Hilbert--Schmidt norm on tensor powers, for which
$\|x^{\otimes m}\|_{\HS}=|x|^m$.

\begin{theorem}[Quantitative polar moment transfer]\label{thm:momenttransfer}
Let $X$ be a spherical $t$-design and suppose
\[
 \delta:=\dH(P_X,B)<1.
\]
Then for every $m\le t$,
\begin{equation}\label{eq:momentbound}
 \boxed{
 \norm{\mathcal V_m(P_X^\circ)-M_{d,m}}_{\HS}
 \le (1-\delta)^{-m}-1
 \le \exp\left(\frac{m\delta}{1-\delta}\right)-1.}
\end{equation}
Moreover,
\begin{equation}\label{eq:polarHausdorff}
 \frac1{1+\delta}B\subset P_X^\circ\subset\frac1{1-\delta}B,
 \qquad
 \dH(P_X^\circ,B)\le\frac{\delta}{1-\delta}.
\end{equation}
\end{theorem}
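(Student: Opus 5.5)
The plan is to first get two-sided control of the support numbers $h_i$ from the Hausdorff hypothesis, and then feed this into the exact identity of \cref{thm:graded}. Here $B$ is the closed unit ball and $P_X$ is the Steiner-normalized representative. The hypothesis $\dH(P_X,B)=\delta$ is equivalent to $\sup_{u\in\Sph^{d-1}}|h_{P_X}(u)-1|=\delta$, because the Hausdorff distance between convex bodies is the sup-norm distance of their support functions on the sphere. Hence
\[
 1-\delta\le h_i\le 1+\delta \qquad (1\le i\le N),
\]
and
\[
 (1-\delta)B\subset P_X\subset(1+\delta)B .
\]
Since $\delta<1$, the inner ball has positive radius, which also re-confirms $0\in\operatorname{int}P_X$.

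The polar inclusions \eqref{eq:polarHausdorff} then follow from order reversal of polarity together with $(sB)^\circ=s^{-1}B$. This gives $\frac1{1+\delta}B\subset P_X^\circ\subset\frac1{1-\delta}B$. Equivalently, $1/(1+\delta)\le h_{P_X^\circ}\le1/(1-\delta)$ on the sphere, so
\[
 \dH(P_X^\circ,B)\le\max\Bigl\{1-\tfrac1{1+\delta},\ \tfrac1{1-\delta}-1\Bigr\}=\tfrac{\delta}{1-\delta}.
\]

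For the moment bound, \eqref{eq:tensorgraded} gives $M_{d,m}=\frac1N\sum_i h_i^m v_i^{\otimes m}$. Therefore
\[
 \mathcal V_m(P_X^\circ)-M_{d,m}=\frac1N\sum_{i=1}^N(1-h_i^m)\,v_i^{\otimes m}.
\]
The triangle inequality and $\|v_i^{\otimes m}\|_{\HS}=|v_i|^m=h_i^{-m}$ bound the norm by
\[
 \frac1N\sum_i\bigl|h_i^{-m}-1\bigr| .
\]
On $[1-\delta,1+\delta]$ the function $s\mapsto|s^{-m}-1|$ is at most $\max\{(1-\delta)^{-m}-1,\ 1-(1+\delta)^{-m}\}$. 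The first term dominates, because $(1-\delta)^{-m}-1\ge 1-(1-\delta)^m\ge 1-(1+\delta)^{-m}$, using $(1-\delta)(1+\delta)\le1$. This proves the first inequality in \eqref{eq:momentbound}.

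The second inequality is elementary. We have $(1-\delta)^{-m}=\exp(-m\log(1-\delta))$, and $-\log(1-\delta)=\log\bigl(1+\frac{\delta}{1-\delta}\bigr)\le\frac{\delta}{1-\delta}$.

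There is no serious obstacle. The only points needing care are the identification of the Hausdorff distance with the sup-distance of support functions, which requires fixing $B$ as the unit ball centred at the Steiner-normalized origin, and the bookkeeping showing that the lower deviation $1-(1+\delta)^{-m}$ never exceeds the upper one. The case $m=0$ is trivial, since both sides of \eqref{eq:momentbound} vanish.
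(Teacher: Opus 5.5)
Your proposal is correct and follows essentially the same route as the paper: support-function bounds $1-\delta\le h_i\le1+\delta$, rewriting the difference as $\frac1N\sum_i(h_i^{-m}-1)x_i^{\otimes m}$ and applying the triangle inequality in the Hilbert--Schmidt norm, then order reversal under polarity. Your explicit check that $1-(1+\delta)^{-m}\le(1-\delta)^{-m}-1$ fills in a small step the paper leaves implicit when it bounds $\max_i|h_i^{-m}-1|$.
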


\begin{proof}
Because the Hausdorff distance of convex bodies equals the uniform distance of their support functions,
\[
 1-\delta\le h_P(u)\le1+\delta
 \qquad(u\in\Sph^{d-1}).
\]
In particular the same holds for each $h_i$.  Since $m\le t$,
\[
 M_{d,m}=\frac1N\sum_i x_i^{\otimes m}.
\]
Therefore
\[
 \mathcal V_m(P^\circ)-M_{d,m}
 =\frac1N\sum_i(h_i^{-m}-1)x_i^{\otimes m}.
\]
Taking Hilbert--Schmidt norms gives
\[
 \norm{\mathcal V_m(P^\circ)-M_{d,m}}_{\HS}
 \le\max_i|h_i^{-m}-1|
 \le(1-\delta)^{-m}-1.
\]
Finally,
\[
 -\log(1-\delta)
 =\int_0^\delta\frac{\dd s}{1-s}
 \le\frac{\delta}{1-\delta},
\]
which proves the exponential estimate.

The inclusions $(1-\delta)B\subset P\subset(1+\delta)B$ reverse under polarity and give the first part of \eqref{eq:polarHausdorff}; the displayed Hausdorff bound follows immediately.  \qedhere
\end{proof}

\begin{corollary}[Growing moment window in dimension three]\label{cor:window}
Let $X_t\subset\Sph^2$ be any sequence of spherical $t$-designs and let $P_t$ be the corresponding Steiner-normalized Minkowski polytopes.  Using the estimate
\[
 \dH(P_t,B)=O(t^{-1/2})
\]
from \cite{An2026}, one has, for each fixed $m$,
\[
 \mathcal V_m(P_t^\circ)=M_{3,m}+O_m(t^{-1/2}).
\]
More generally, if $m=m(t)=o(\sqrt t)$, then
\[
 \norm{\mathcal V_{m(t)}(P_t^\circ)-M_{3,m(t)}}_{\HS}\longrightarrow0.
\]
\end{corollary}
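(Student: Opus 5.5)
The plan is to plug the three-dimensional Hausdorff estimate into \cref{thm:momenttransfer}, with $d=3$ and $P=P_t$ Steiner-normalized.

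\textbf{Applicability.} From \cite{An2026} there is an absolute constant $C$ with
\[
 \delta_t:=\dH(P_t,B)\le Ct^{-1/2}.
\]
Hence $\delta_t\to0$, and for all $t$ beyond some $t_0$ we have $\delta_t\le1/2<1$, so the hypothesis of \cref{thm:momenttransfer} holds. The remaining finitely many $t$ can be ignored in an asymptotic statement. One condition is easy to overlook: \cref{thm:momenttransfer} needs $m\le t$. For fixed $m$ this holds once $t\ge m$. For $m(t)=o(\sqrt t)$ we have $m(t)\le\sqrt t\le t$ eventually, so it also holds.

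\textbf{Fixed $m$.} By \eqref{eq:momentbound},
\[
 \norm{\mathcal V_m(P_t^\circ)-M_{3,m}}_{\HS}\le\exp\!\Bigl(\frac{m\delta_t}{1-\delta_t}\Bigr)-1\le \exp(2m\delta_t)-1,
\]
using $\delta_t\le1/2$. The argument $2m\delta_t\le 2mC t^{-1/2}$ is bounded in terms of $m$ alone, say by $2mC$. On any interval $[0,K]$ we have $e^s-1\le e^K s$. Therefore the norm is at most
\[
 2mCe^{2mC}\,t^{-1/2}=O_m(t^{-1/2}).
\]
Since all norms on a fixed finite-dimensional tensor space are equivalent, the Hilbert--Schmidt bound gives the stated $O_m$ asymptotic entrywise.

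\textbf{Growing $m$.} For $m=m(t)=o(\sqrt t)$ the same bound gives
\[
 \norm{\mathcal V_{m(t)}(P_t^\circ)-M_{3,m(t)}}_{\HS}\le \exp(2m(t)\delta_t)-1\le\exp\bigl(2C\,m(t)t^{-1/2}\bigr)-1.
\]
Because $m(t)t^{-1/2}\to0$, the exponent tends to zero, and continuity of $\exp$ at $0$ gives convergence to $0$. We could even quantify this as $O(m(t)t^{-1/2})$.

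\textbf{Main obstacle.} There is essentially no difficulty here. The only points needing care are the eventual validity of the two hypotheses $\delta_t<1$ and $m(t)\le t$, and the fact that the implied constant from \cite{An2026} is uniform over all sequences of $t$-designs.
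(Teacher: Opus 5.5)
Your proposal is correct and follows the paper's proof, which simply inserts $\delta_t=O(t^{-1/2})$ into the bound \eqref{eq:momentbound} of \cref{thm:momenttransfer} and observes that the exponential bound tends to zero once $m\delta_t\to0$. Your extra checks, that $\delta_t<1$ and $m\le t$ hold for all large $t$, together with the linearization $e^s-1\le e^K s$ on $[0,K]$ that gives the explicit fixed-$m$ rate, are details the paper leaves implicit.
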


\begin{proof}
Insert $\delta_t=O(t^{-1/2})$ into \eqref{eq:momentbound}.  If $m\delta_t\to0$, the exponential upper bound tends to zero.  \qedhere
\end{proof}

\begin{remark}[Higher dimensions]
The higher-dimensional estimate in \cite{An2026} is stated on $\Sph^q\subset\R^{q+1}$ as $O(t^{-1/q})$.  In the notation of the present paper, where the ambient dimension is $d$, this corresponds to a moment window $m=o(t^{1/(d-1)})$ whenever that inverse-Minkowski rate is used.
\end{remark}

\section{Self-polar design polytopes and a structured slack matrix}
\label{sec:selfpolar}

\begin{definition}[Self-polarity up to scale]\label{def:selfpolar}
A convex body $P\subset\R^d$ containing the origin in its interior is \emph{orthogonally self-polar up to scale} if
\begin{equation}\label{eq:selfpolar}
 P^\circ=cUP
\end{equation}
for some $c>0$ and $U\in O(d)$.  When the scale of $P$ is free, it can be normalized away; here we retain $c$ because the Minkowski surface-area normalization fixes the scale.
\end{definition}

This is the scaled version of the self-polarity studied, for example, in Jensen \cite{Jensen2021}.

\begin{theorem}[Facet--vertex design duality]\label{thm:bidual}
Let $X=\{x_1,\ldots,x_N\}$ be a spherical $t$-design, $t\ge2$, and let $P=P_X$ satisfy \eqref{eq:selfpolar}.  Then $P$ has exactly $N$ vertices.  After relabeling them as $y_1,\ldots,y_N$,
\begin{equation}\label{eq:vertexformula}
 \boxed{y_j=\frac{1}{c h_j}U^Tx_j,}
 \qquad
 \boxed{\frac{y_j}{|y_j|}=U^Tx_j.}
\end{equation}
Consequently, the radial vertex directions of $P$ form a spherical $t$-design orthogonally congruent to the facet-normal design $X$.

Furthermore,
\begin{equation}\label{eq:incidenceineq}
 \ip{Ux_i}{x_j}\le c h_i h_j
 \qquad(1\le i,j\le N),
\end{equation}
and equality holds if and only if the vertex $y_j$ lies on the facet with outer normal $x_i$.
Finally,
\begin{equation}\label{eq:U2sym}
 U^2P=P.
\end{equation}
\end{theorem}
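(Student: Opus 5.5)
The plan is to read everything off the vertex description \eqref{eq:polarvertices} of $P^\circ$, transported through the linear map $z\mapsto c^{-1}U^Tz$. From $P^\circ=cUP$ we get $P=c^{-1}U^TP^\circ$. Invertible linear maps send vertices bijectively to vertices. By \eqref{eq:polarvertices} the vertex set of $P^\circ$ is exactly $\{v_j=x_j/h_j\}$, and these are $N$ distinct points because the representation \eqref{eq:Hrep} is irredundant. Hence the vertices of $P$ are $y_j=c^{-1}U^Tv_j=(ch_j)^{-1}U^Tx_j$, and there are exactly $N$ of them. Since $c,h_j>0$ and $U^T$ is orthogonal, the radial direction is $y_j/|y_j|=U^Tx_j$. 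A spherical $t$-design stays a $t$-design under an orthogonal map: polynomials of degree $\le t$ composed with $U^T$ are again of degree $\le t$, and $\sigma$ is $O(d)$-invariant. This gives the congruence claim.

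For \eqref{eq:incidenceineq}, every vertex $y_j$ lies in $P=\bigcap_i\{y:\ip{x_i}{y}\le h_i\}$. Substituting $y_j$ gives
\[
(ch_j)^{-1}\ip{x_i}{U^Tx_j}\le h_i .
\]
Since $\ip{x_i}{U^Tx_j}=\ip{Ux_i}{x_j}$, multiplying through by $ch_j>0$ gives the inequality. Equality means that $y_j$ attains the support value $h_i$ in direction $x_i$. Because $P$ is a polytope with facet $F_i=P\cap\{\ip{x_i}{y}=h_i\}$, this happens exactly when $y_j\in F_i$. So equality holds if and only if the vertex $y_j$ lies on the facet with outer normal $x_i$.

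For \eqref{eq:U2sym}, I polarize the relation $P^\circ=cUP$. I will use two standard rules: $(cUP)^\circ=c^{-1}UP^\circ$ for orthogonal $U$ (since $\ip{z}{cUy}=\ip{cU^Tz}{y}$), and the bipolar theorem $P^{\circ\circ}=P$, which applies because $0\in\operatorname{int}P$. These give
\[
P=(P^\circ)^\circ=c^{-1}UP^\circ=c^{-1}U(cUP)=U^2P.
\]
The only point needing care is the bookkeeping of $U$ versus $U^T$ in the polar of a linear image. I will check it directly from the definition of the polar rather than quote it. Everything else is a routine application of \eqref{eq:polarvertices} and vertex preservation under linear bijections.
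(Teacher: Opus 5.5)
Your proposal is correct and follows essentially the same route as the paper: you transport the vertex set $\{x_j/h_j\}$ of $P^\circ$ from \eqref{eq:polarvertices} through the linear bijection $P=c^{-1}U^TP^\circ$ and substitute $y_j$ into the facet inequalities. For $U^2P=P$ you polarize $P^\circ=cUP$ using the bipolar theorem together with $(cUP)^\circ=c^{-1}UP^\circ$, and your bookkeeping of $U$ versus $U^T$ is right.
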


\begin{proof}
By \eqref{eq:polarvertices}, the vertices of $P^\circ$ are exactly $x_j/h_j$.  On the other hand, \eqref{eq:selfpolar} gives
\[
 \Verts(P^\circ)=cU\Verts(P).
\]
Hence $P$ has $N$ vertices, and after relabeling we may impose
\[
 cUy_j=\frac{x_j}{h_j}.
\]
This is exactly \eqref{eq:vertexformula}.  Orthogonal invariance of spherical designs proves the design assertion.

Since $y_j\in P$, the $i$th facet inequality gives
\[
 \ip{x_i}{y_j}\le h_i.
\]
Substituting \eqref{eq:vertexformula} yields
\[
 \frac1{c h_j}\ip{x_i}{U^Tx_j}\le h_i,
\]
which is \eqref{eq:incidenceineq}.  Equality in this inequality is precisely the condition $y_j\in F_i$.

Finally,
\[
 P=(P^\circ)^\circ=(cUP)^\circ=c^{-1}UP^\circ=U^2P,
\]
where orthogonality of $U$ is used in the polar transformation rule.  \qedhere
\end{proof}

The next result packages the preceding inequalities into a single matrix with a prescribed low rank.

Let
\[
 X=[x_1\ \cdots\ x_N]\in\R^{d\times N},
 \qquad
 h=(h_1,\ldots,h_N)^T,
 \qquad
 D=\diag(h_1,\ldots,h_N).
\]

\begin{theorem}[Self-polar design slack factorization]\label{thm:slack}
Under the hypotheses of \cref{thm:bidual}, define
\begin{equation}\label{eq:Adef}
 A_{ij}:=c h_i h_j-\ip{Ux_i}{x_j}.
\end{equation}
Then
\begin{equation}\label{eq:Amatrix}
 \boxed{A=c\,hh^T-X^TU^TX.}
\end{equation}
Moreover:
\begin{enumerate}[label=\textup{(\roman*)}]
\item $A$ is entrywise nonnegative;
\item $A_{ij}=0$ if and only if $y_j\in F_i$;
\item $\rank A=d+1$;
\item if $H=\sum_i h_i$, then
\begin{equation}\label{eq:rowsums}
 A\mathbf1=A^T\mathbf1=cHh;
\end{equation}
\item the matrix $B:=X^TU^TX$ has rank $d$ and its nonzero singular values are all $N/d$.
\end{enumerate}
If $U^2x_j=x_{\pi(j)}$ for the permutation $\pi$ induced by the symmetry $U^2P=P$, and $\Pi$ is its permutation matrix, then
\begin{equation}\label{eq:twistsym}
 A^T=A\Pi.
\end{equation}
In particular, an involutory polarity $U^2=\Id$ gives a symmetric design slack matrix.
\end{theorem}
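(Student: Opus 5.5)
The plan is to verify each item directly from the factorization, using only \cref{thm:bidual} and the first two design moments \eqref{eq:firstsecond}. First, entrywise, $(X^TU^TX)_{ij}=x_i^TU^Tx_j=\ip{Ux_i}{x_j}$ and $(hh^T)_{ij}=h_ih_j$, which gives \eqref{eq:Amatrix} at once. Items (i) and (ii) are then restatements of \eqref{eq:incidenceineq} and its equality case in \cref{thm:bidual}.

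For (iii), I write
\[
 A=L R,\qquad L=\begin{bmatrix} h & X^T\end{bmatrix}\in\R^{N\times(d+1)},\qquad R=\begin{bmatrix} c\,h^T\\ -U^TX\end{bmatrix}\in\R^{(d+1)\times N},
\]
so $\rank A\le d+1$. For equality it suffices that $L$ has full column rank and $R$ has full row rank, since such a product has rank exactly $d+1$.
\begin{itemize}
\item By the second identity in \eqref{eq:firstsecond}, $XX^T=(N/d)\Id_d$, so $X^T$ has rank $d$.
\item Suppose $h=X^Ta$ for some $a$. Summing coordinates and using the balance condition $\sum_ix_i=0$ gives $\sum_ih_i=\ip{a}{\sum_ix_i}=0$. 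This is impossible because every $h_i>0$. Hence $L$ has rank $d+1$.
\item The same argument applies to $R$: $U^TX$ has rank $d$, and a relation $h^T=b^TU^TX$ would again force $\sum_i h_i=0$.
\end{itemize}
This is the only step needing a genuine idea: positivity of the support numbers, which comes from $0\in\operatorname{int}P$, combined with the balance condition. I expect it to be the main, though mild, obstacle.

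For (iv), $A\mathbf1=c\,h(h^T\mathbf1)-X^TU^T(X\mathbf1)=cHh$, because $X\mathbf1=\sum_ix_i=0$. Similarly, $A^T\mathbf1=c\,hH-X^TU(X\mathbf1)=cHh$.

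For (v), I compute
\[
 BB^T=X^TU^T(XX^T)UX=\tfrac Nd X^TX.
\]
Since $XX^T=(N/d)\Id$, the matrix $X^TX$ has eigenvalue $N/d$ with multiplicity $d$ and $0$ otherwise. So $BB^T$ has eigenvalue $(N/d)^2$ with multiplicity $d$, which means $B$ has rank $d$ and all its nonzero singular values equal $N/d$.

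For the twist symmetry, the first step is to show that $U^2$ permutes the facet data. Since $U^2P=P$, the map $U^2$ sends facets to facets and outer normals to outer normals, so $U^2x_j=x_{\pi(j)}$ for a permutation $\pi$. Moreover $h_{\pi(j)}=h_P(U^2x_j)=h_{U^{-2}P}(x_j)=h_j$.

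I take $\Pi_{kj}=1$ iff $k=\pi(j)$. Then
\[
 (A\Pi)_{ij}=A_{i\pi(j)}=c h_ih_j-\ip{Ux_i}{U^2x_j}=c h_ih_j-\ip{x_i}{Ux_j}.
\]
This equals $A_{ji}=c h_jh_i-\ip{Ux_j}{x_i}$, which proves \eqref{eq:twistsym}. Finally, when $U^2=\Id$ we have $\pi=\mathrm{id}$ and $\Pi=\Id$, so $A$ is symmetric.
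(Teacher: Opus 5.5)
Your proof is correct and follows the paper's argument essentially item by item: the entrywise identity, (i)--(ii) from the incidence inequality of \cref{thm:bidual}, $X\mathbf1=0$ for (iv), $BB^T=\frac Nd X^TX$ for (v), and $h_{\pi(j)}=h_j$ for the twist symmetry. The only real difference is in (iii), where your $A=LR$ is the paper's factorization $A=S(cD)$ with $S_{ij}=[h_i,-x_i^T]\binom{1}{y_j}$ up to a sign and the column scaling $cD$. The paper gets full rank of the two factors from general polytope facts (the rank of a slack matrix, or the spanning of the augmented facet and vertex configurations), whereas you derive it directly from $\sum_i x_i=0$, $\rank X=d$ and $h_i>0$; this is self-contained and in fact shows $\rank(c\,hh^T-X^TU^TX)=d+1$ for every $U\in O(d)$, $c>0$ and positive vector $h$.
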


\begin{proof}
The matrix identity follows from
\[
 (X^TU^TX)_{ij}=x_i^TU^Tx_j=\ip{Ux_i}{x_j}.
\]
Parts (i) and (ii) are exactly \eqref{eq:incidenceineq} and its equality case.

Let $S$ be the usual slack matrix of $P$, with rows indexed by facets and columns by the matched vertices $y_j$:
\[
 S_{ij}=h_i-\ip{x_i}{y_j}.
\]
Using \eqref{eq:vertexformula},
\[
 c h_j S_{ij}
 =c h_i h_j-\ip{Ux_i}{x_j}=A_{ij}.
\]
Hence
\begin{equation}\label{eq:ASD}
 A=S(cD).
\end{equation}
The diagonal matrix $cD$ is invertible.  A slack matrix of a full-dimensional $d$-polytope has rank $d+1$; see, for example, \cite{GouveiaEtAl2013}.  Equivalently, in the present setting this also follows directly from the factorization
\[
 S_{ij}=[h_i,-x_i^T]\binom{1}{y_j}
\]
and the fact that both the facet and vertex augmented configurations span dimension $d+1$.  Thus $\rank A=d+1$.

Since $X\mathbf1=0$ by the degree-one design condition,
\[
 (X^TU^TX)\mathbf1=0,
 \qquad
 (X^TU^TX)^T\mathbf1=0.
\]
This proves \eqref{eq:rowsums}.

For the singular-value statement, degree-two exactness gives
\[
 XX^T=\frac Nd\Id_d.
\]
Set $F=\sqrt{d/N}\,X$, so $FF^T=\Id_d$.  Then
\[
 B=\frac Nd F^TU^TF.
\]
If $T=F^TU^TF$, then
\[
 TT^T=F^TU^TFF^TUF=F^TF,
\]
which is an orthogonal projector of rank $d$.  Hence $T$ has exactly $d$ singular values equal to $1$, proving (v).

Finally $U^2P=P$ implies $U^2X=X\Pi$ after relabeling facet normals, and support-number invariance gives $h_{\pi(j)}=h_j$.  Thus
\[
 A_{i,\pi(j)}
 =c h_i h_j-\ip{Ux_i}{U^2x_j}
 =c h_i h_j-\ip{Ux_j}{x_i}
 =A^T_{ij},
\]
which is \eqref{eq:twistsym}.  \qedhere
\end{proof}

\begin{remark}[A low-rank nonnegative design matrix]
The decomposition \eqref{eq:Amatrix} is rigid in three simultaneous senses: $A$ is nonnegative, its zero pattern is a polytope incidence relation, and after subtraction of the rank-one term $c hh^T$ one obtains a rank-$d$ partial-isometry Gram structure forced by the spherical $2$-design condition.  This coupling does not occur in a general self-polar polytope without the design moment equations.
\end{remark}

\section{Node-transitive designs and the self-polar incidence level}
\label{sec:transitive}

\begin{definition}
A spherical design $X\subset\Sph^{d-1}$ is \emph{node-transitive} if a subgroup $G\le O(d)$ acts transitively on $X$.
\end{definition}

\begin{proposition}[Constant support numbers]\label{prop:constant-h}
Let $X$ be node-transitive and let $P_X$ be Steiner-normalized.  Then
\[
 h_{P_X}(x_1)=\cdots=h_{P_X}(x_N)=:h.
\]
If $C=\conv X$, then
\begin{equation}\label{eq:P=hpolarC}
 \boxed{P_X=hC^\circ,\qquad P_X^\circ=h^{-1}C.}
\end{equation}
Consequently, $P_X$ is self-polar up to scale and orthogonal transformation if and only if $C$ is.
\end{proposition}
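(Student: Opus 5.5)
The plan is to combine Minkowski uniqueness with equivariance of the Steiner point. Let $g\in G$ with $gX=X$.

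\emph{Step 1: $P_X$ is $G$-invariant.} Since $g$ permutes the nodes, it preserves $\mu_X$. The surface-area measure transforms as $S_{gP}=g_\#S_P$, so
\[
 S_{gP_X}=\omega_{d-1}\,g_\#\mu_X=\omega_{d-1}\mu_X=S_{P_X}.
\]
By the uniqueness part of \cref{thm:minkowski}, $gP_X=P_X+a_g$ for some translation vector $a_g$. The Steiner point is equivariant under rigid motions, so $s(gP_X)=g\,s(P_X)=0$. It is also translation-covariant, so $s(P_X+a_g)=a_g$. Hence $a_g=0$ and $gP_X=P_X$ for all $g\in G$.

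\emph{Step 2: the support numbers are equal.} Support functions satisfy $h_{gP}(gu)=h_P(u)$ for orthogonal $g$. Therefore
\[
 h_{P_X}(gx_1)=h_{gP_X}(gx_1)=h_{P_X}(x_1).
\]
Transitivity of $G$ on $X$ then gives $h_i=h$ for all $i$. This is the only place the Steiner normalization is used, and I regard it as the sole delicate point. Without normalization, $G$ would only permute translates of $P_X$, and the $h_i$ need not agree.

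\emph{Step 3: identify $P_X$ and its polar.} By \eqref{eq:Hrep},
\[
 P_X=\{y:\ip{x_i}{y}\le h\ \forall i\}=h\{z:\ip{x_i}{z}\le1\ \forall i\}=h\,(\conv X)^\circ=hC^\circ.
\]
Here the middle set is $C^\circ$ because a linear inequality holds on $\conv X$ iff it holds at the points $x_i$. Note that $0\in\operatorname{int}C$, since the $x_i$ are balanced and span $\R^d$. Polarity reverses scaling and $C^{\circ\circ}=C$, so $P_X^\circ=h^{-1}C$. This also agrees with \eqref{eq:polarvertices}.

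\emph{Step 4: the equivalence.} Suppose $P_X^\circ=cUP_X$. Substituting Step 3 gives $h^{-1}C=chUC^\circ$. Applying polarity to both sides yields
\[
 C^\circ=\frac{1}{c^2h^2}\,U^{-1}C\cdot ch\cdot h^{-1},
\]
and more simply, $C=ch^2\,UC^\circ$ gives $C^\circ=(ch^2)^{-1}U C$, using $(\lambda UK)^\circ=\lambda^{-1}UK^\circ$. So $C$ is self-polar up to scale with the same $U$.

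Conversely, suppose $C^\circ=c'UC$. Then
\[
 P_X^\circ=h^{-1}C=h^{-1}(c')^{-1}U^{-1}C^\circ=\bigl(c'h^2\bigr)^{-1}U^{-1}P_X.
\]
Hence $P_X$ is self-polar up to scale with the orthogonal map $U^{-1}\in O(d)$. This is pure bookkeeping with the rule $(\lambda UK)^\circ=\lambda^{-1}UK^\circ$.
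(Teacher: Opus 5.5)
Your proof is correct and follows the paper's argument: Minkowski uniqueness plus Steiner normalization gives $gP_X=P_X$, transitivity equalizes the support numbers, the facet half-space representation gives $P_X=hC^\circ$, and the equivalence is polarization and rescaling. You also supply details the paper leaves implicit, namely Steiner-point equivariance and translation covariance, $0\in\operatorname{int}C$, and the rule $(\lambda UK)^\circ=\lambda^{-1}UK^\circ$; the only blemish is the first display of Step~4, which simplifies to $C^\circ=(ch^2)^{-1}U^{-1}C$ and follows by solving rather than by polarizing (it agrees with your cleaner $C^\circ=(ch^2)^{-1}UC$ because $U^2C=C$), so you can simply delete it.
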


\begin{proof}
For $g\in G$, invariance of $X$ gives $g_\#S_{P_X}=S_{P_X}$.  Minkowski uniqueness says that $gP_X$ is a translate of $P_X$.  Steiner normalization removes the translation, so $gP_X=P_X$.  Thus $h_P(gx)=h_P(x)$; transitivity on $X$ gives the common value $h$.

Now
\[
 P=\bigcap_i\{y:\ip{x_i}{y}\le h\}
 =h\left(\conv X\right)^\circ,
\]
which proves \eqref{eq:P=hpolarC}.  The final equivalence follows by polarizing and rescaling.  \qedhere
\end{proof}

Suppose now that $P_X^\circ=cUP_X$.  Then \eqref{eq:vertexformula} and constant $h$ show that every vertex of $P_X$ has the same Euclidean norm
\[
 R=\frac1{ch},
\]
while every facet hyperplane has distance
\[
 r=h
\]
from the origin.  Thus $r$ and $R$ are respectively the centered inradius and circumradius.  Define
\begin{equation}\label{eq:alpha}
 \alpha:=ch^2=\frac rR.
\end{equation}

\begin{proposition}[Incidence level as a twisted polarization value]\label{prop:alpha}
Under node-transitivity and self-polarity,
\begin{equation}\label{eq:maxrow}
 \boxed{\alpha=\max_{1\le j\le N}\ip{Ux_i}{x_j}}
 \qquad\text{for every }i.
\end{equation}
Moreover,
\[
 y_j\in F_i
 \quad\Longleftrightarrow\quad
 \ip{Ux_i}{x_j}=\alpha.
\]
\end{proposition}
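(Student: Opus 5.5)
The plan is to specialize \cref{thm:bidual} to the case of constant support numbers given by \cref{prop:constant-h}. With $h_i=h$ for all $i$, inequality \eqref{eq:incidenceineq} becomes
\[
 \ip{Ux_i}{x_j}\le c h^2=\alpha\qquad(1\le i,j\le N),
\]
with equality exactly when the vertex $y_j$ lies on the facet $F_i$. This immediately gives the incidence characterization $y_j\in F_i\Leftrightarrow\ip{Ux_i}{x_j}=\alpha$, and it also gives the upper bound $\max_j\ip{Ux_i}{x_j}\le\alpha$ for every $i$.

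For the reverse inequality I will show that the maximum is attained in every row. Fix $i$. The facet $F_i$ of $P=P_X$ is a nonempty face of a polytope, and the vertices of $P$ are exactly $y_1,\ldots,y_N$ by \cref{thm:bidual}. Since $F_i$ is the convex hull of the vertices of $P$ lying on it, it contains at least one (indeed at least $d$) of the $y_j$. For such a $j$ the equality case above gives $\ip{Ux_i}{x_j}=\alpha$. Hence the maximum in \eqref{eq:maxrow} equals $\alpha$ for every $i$.

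An equivalent way to phrase the argument uses row $i$ of the slack matrix $A$ from \cref{thm:slack}. In the node-transitive case $A_{ij}=\alpha-\ip{Ux_i}{x_j}$. This matrix is nonnegative and each row contains a zero, since every facet has at least one incident vertex.

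There is no real obstacle here. The only point requiring care is to use the fact that a facet of a polytope is spanned by vertices of the polytope, together with the relabeling in \cref{thm:bidual}, which identifies all vertices of $P$ with the $y_j$. This guarantees that the maximizing index corresponds to an actual vertex of $P$, so the bound is attained rather than merely approached.
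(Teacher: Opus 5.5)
Your proposal is correct and follows the paper's own proof: you specialize \eqref{eq:incidenceineq} to constant support numbers $h_i=h$ to get $\ip{Ux_i}{x_j}\le ch^2=\alpha$, with equality iff $y_j\in F_i$, and then use that every facet contains at least one vertex $y_j$ to conclude that the bound is attained in each row. Your remark that \cref{thm:bidual} identifies all vertices of $P$ with the $y_j$ is a detail the paper leaves implicit.
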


\begin{proof}
With $h_i=h_j=h$, \eqref{eq:incidenceineq} becomes
\[
 \ip{Ux_i}{x_j}\le ch^2=\alpha.
\]
Every facet contains at least one vertex, so equality occurs in every row.  \qedhere
\end{proof}

Thus self-polarity converts a geometric ratio $r/R$ into the largest correlation between a rotated design node and the original design.  The next section constrains this number using only the design moments.

\section{Covering thresholds and quadrature-combinatorics rigidity}
\label{sec:quadrature}

\subsection{The spherical marginal moment problem}

Let $\mu_d$ be the distribution of the first coordinate of a uniformly distributed point on $\Sph^{d-1}$.  Thus
\begin{equation}\label{eq:marginal}
 \dd\mu_d(s)=c_d(1-s^2)^{(d-3)/2}\dd s,
 \qquad -1\le s\le1,
\end{equation}
where $c_d$ normalizes the measure.

For a fixed $u\in\Sph^{d-1}$ and a spherical $t$-design $X$, define
\[
 \nu_u:=\frac1N\sum_{j=1}^N\delta_{\ip{u}{x_j}}.
\]
Then
\begin{equation}\label{eq:momentmatch1d}
 \int q\dd\nu_u=\int q\dd\mu_d
 \qquad\text{for every polynomial }q\text{ of degree }\le t.
\end{equation}

We now define the threshold $\eta_{t,d}$.  Let $p_k$ denote the degree-$k$ orthogonal polynomial for $\mu_d$, with arbitrary positive leading coefficient.  Let $p_k^+$ denote the degree-$k$ orthogonal polynomial for the modified positive measure
\[
 (1+s)\dd\mu_d(s).
\]
For $t\ge1$, set
\begin{equation}\label{eq:eta}
 \eta_{t,d}:=
 \begin{cases}
 \text{largest zero of }p_{n+1},&t=2n+1,\\
 \text{largest zero of }p_n^+,&t=2n.
 \end{cases}
\end{equation}
For odd strength these are the largest Gaussian nodes for $\mu_d$; for even strength they are the largest Gauss--Radau nodes with the endpoint $-1$ prescribed.  This is the spherical form of the Fazekas--Levenshtein covering threshold \cite{FazekasLevenshtein1995}.

\begin{theorem}[Moment threshold and equality quadrature]\label{thm:FLselfcontained}
Let $\nu$ be a probability measure supported on $[-1,\alpha]$ whose moments agree with $\mu_d$ through degree $t$.  Then
\begin{equation}\label{eq:alphageeta}
 \boxed{\alpha\ge\eta_{t,d}.}
\end{equation}

If $t=2n+1$ and equality holds, then $\nu$ is the $(n+1)$-node Gaussian quadrature measure for $\mu_d$: its support is exactly the zeros
\[
 \xi_0<\cdots<\xi_n=\eta_{t,d}
\]
of $p_{n+1}$, with the corresponding positive Gaussian weights $\lambda_0,\ldots,\lambda_n$.

If $t=2n$ and equality holds, then $\nu$ is the $(n+1)$-node Gauss--Radau quadrature measure with prescribed node $-1$: its support is
\[
 -1=\xi_0<\xi_1<\cdots<\xi_n=\eta_{t,d},
\]
where $\xi_1,\ldots,\xi_n$ are the zeros of $p_n^+$, with the corresponding positive Radau weights $\lambda_0,\ldots,\lambda_n$.
\end{theorem}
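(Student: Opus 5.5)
The plan is the standard test-polynomial argument of Fazekas--Levenshtein type. We build a polynomial $f$ of degree at most $t$ that is nonnegative on $[-1,\alpha]$ whenever $\alpha<\eta_{t,d}$, yet has strictly negative $\mu_d$-integral. Moment matching gives $\int f\dd\nu=\int f\dd\mu_d<0$. Since $\nu$ is supported on $[-1,\alpha]$, where $f\ge0$, this is a contradiction, and so $\alpha\ge\eta_{t,d}$. The equality cases come from running the same test polynomial at $\alpha=\eta_{t,d}$: there $\int f\dd\mu_d=0$, so $\nu$ must be concentrated on the zero set of $f$ in $[-1,\alpha]$. Moment matching then identifies the weights.

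\emph{Odd case $t=2n+1$.} Let $\xi_0<\dots<\xi_n$ be the zeros of $p_{n+1}$, which are simple and lie in $(-1,1)$. Take
\[
 f(s)=(\eta_{t,d}-s)\prod_{k<n}(s-\xi_k)^2=-\frac{p_{n+1}(s)^2}{\ell\,(s-\xi_n)},
\]
where $\ell$ is the leading coefficient of $p_{n+1}$. This $f$ has degree $2n+1=t$.

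Since $f$ vanishes at every node, Gaussian quadrature (exact to degree $2n+1$) gives $\int f\dd\mu_d=0$. If instead $\alpha<\eta_{t,d}$, we use the perturbed polynomial $f_\alpha(s)=(\alpha-s)\prod_{k<n}(s-\xi_k)^2$. It is nonnegative on $[-1,\alpha]$, and Gaussian quadrature gives
\[
 \int f_\alpha\dd\mu_d=\lambda_n(\alpha-\xi_n)\prod_{k<n}(\xi_n-\xi_k)^2<0,
\]
which yields the desired contradiction.

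Now suppose $\alpha=\eta_{t,d}$. Then $\int f\dd\nu=0$ and $f\ge0$ on the support, so $\operatorname{supp}\nu\subset\{\xi_0,\dots,\xi_n\}$. Write $\nu=\sum_k w_k\delta_{\xi_k}$. Testing against the Lagrange basis polynomials for these $n+1$ nodes, which have degree $n\le t$, gives $w_k=\int L_k\dd\mu_d=\lambda_k$. Hence $\nu$ is the Gaussian measure.

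\emph{Even case $t=2n$.} Use $(1+s)\dd\mu_d$ and its orthogonal polynomial $p_n^+$, with zeros $\xi_1<\dots<\xi_n$ in $(-1,1)$. The Radau rule with nodes $-1,\xi_1,\dots,\xi_n$ is exact to degree $2n$ for $\mu_d$, with positive weights. This is standard: for $\deg q\le 2n$, write $q=q(-1)+(1+s)r$ with $\deg r\le 2n-1$, and apply the $n$-point Gaussian rule for $(1+s)\dd\mu_d$ to $r$.

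The test polynomial is
\[
 f_\alpha(s)=(1+s)(\alpha-s)\prod_{1\le k<n}(s-\xi_k)^2,
\]
of degree $2n$, nonnegative on $[-1,\alpha]$. The Radau rule gives $\int f_\alpha\dd\mu_d=\lambda_n(1+\xi_n)(\alpha-\xi_n)\prod_{k<n}(\xi_n-\xi_k)^2$, which is negative when $\alpha<\xi_n$. For equality, the zero set of $f_{\eta}$ in $[-1,\eta]$ is $\{-1,\xi_1,\dots,\xi_n\}$, and the Lagrange interpolation argument again fixes the weights. Interpolation on $n+1$ nodes uses polynomials of degree $n\le 2n$, so the moment condition suffices.

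\emph{Expected obstacle.} The main point of care is the Gauss--Radau exactness and positivity of its weights, which I derive from the reduction to Gaussian quadrature for $(1+s)\dd\mu_d$ described above. A second, smaller point is confirming that $\lambda_n>0$ and $\xi_n>-1$, so that the sign of $\int f_\alpha\dd\mu_d$ is strictly negative. In the degenerate case $n=0$ the products are empty, and the statement reduces to the mean condition, which I check directly.
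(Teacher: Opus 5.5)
Your proof is correct. Your certificate is the same one the paper uses: $f_\alpha$ equals $(\alpha-s)q_*^2$, resp.\ $(\alpha-s)(1+s)q_*^2$, where $q_*$ is the paper's extremizer. What differs is how you justify it.

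The paper argues variationally. From $0\le\int(\alpha-s)q^2\dd\nu=\int(\alpha-s)q^2\dd\mu_d$ for all $q\in\mathcal P_n$ it obtains $\alpha\ge\sup_q\int sq^2\dd\mu_d/\int q^2\dd\mu_d$. It then identifies this supremum with the largest zero of $p_{n+1}$ through the Jacobi-matrix spectral description. In the equality case it must also identify the extremizer as $p_{n+1}(s)/(s-\eta_{t,d})$ via Christoffel--Darboux, which rests on simplicity of the top eigenvalue.

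You instead insert $q_*=\prod_{k<n}(s-\xi_k)$ from the outset and evaluate $\int(\alpha-s)q_*^2\dd\mu_d=\lambda_n(\alpha-\xi_n)q_*(\xi_n)^2$ by exactness of the Gaussian (resp.\ Radau) rule. This needs only quadrature exactness and positivity of the weights, with no spectral characterization. It also makes the equality case immediate, since the zero set of the certificate is explicit.

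What the paper's route buys is the reading of $\eta_{t,d}$ as the optimal value over the whole family of certificates $(\alpha-s)q^2$, which explains why this threshold is sharp. Your formula, in turn, gives directly the defect identity $\int\Phi_\alpha\dd\nu=\alpha-\eta_{t,d}$ of \cref{lem:certificate-coercive}, after scaling $q_*$ so that $\lambda_nq_*(\xi_n)^2=1$ (resp.\ $\lambda_n(1+\xi_n)q_*(\xi_n)^2=1$).

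Two small repairs are needed.

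First, the identity $f=-p_{n+1}^2/(\ell(s-\xi_n))$ needs $\ell^2$ in the denominator. This is harmless, since the identity is never used.

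Second, your reduction of the Radau rule to the Gaussian rule for $(1+s)\dd\mu_d$ only shows positivity of the weights $\lambda_k^+/(1+\xi_k)$ at $\xi_1,\dots,\xi_n$. The weight at $-1$ is $1-\sum_k\lambda_k^+/(1+\xi_k)$, and you need its positivity to conclude that $-1$ actually lies in the support of $\nu$. It follows by applying the rule to $\prod_{k\ge1}(s-\xi_k)^2$, a degree-$2n$ polynomial with positive $\mu_d$-integral and positive value at $-1$. Likewise, in the odd case, ``support exactly the zeros'' uses positivity of every Gaussian weight, $\lambda_k=\int L_k^2\dd\mu_d>0$, and not just $\lambda_n>0$.
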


\begin{proof}
Suppose first that $t=2n+1$.  For every polynomial $q$ of degree at most $n$,
\[
 0\le\int(\alpha-s)q(s)^2\dd\nu(s)
 =\int(\alpha-s)q(s)^2\dd\mu_d(s),
\]
where moment matching is valid because the integrand has degree at most $2n+1$.  Hence
\[
 \alpha\ge
 \sup_{0\ne q\in\mathcal P_n}
 \frac{\int s q(s)^2\dd\mu_d(s)}{\int q(s)^2\dd\mu_d(s)}.
\]
The supremum is the largest eigenvalue of the compression of multiplication by $s$ to $\mathcal P_n$, equivalently the largest zero of $p_{n+1}$; see the standard Jacobi-matrix description of Gaussian quadrature, e.g. Szeg\H{o} \cite{Szego1975}.  This proves \eqref{eq:alphageeta}.

If equality holds, choose an extremal polynomial $q_*$ for the Rayleigh quotient.  Then
\[
 \int(\eta_{t,d}-s)q_*(s)^2\dd\nu(s)=0.
\]
Every summand is nonnegative on the support of $\nu$, so each support point is either $\eta_{t,d}$ or a zero of $q_*$.  By the Christoffel--Darboux formula, $q_*$ is proportional to $p_{n+1}(s)/(s-\eta_{t,d})$.  Thus the support is contained in the $n+1$ zeros of $p_{n+1}$.  Moment exactness through degree $2n+1$ uniquely determines the masses on these nodes, giving the Gaussian quadrature weights; their positivity shows that every node occurs.

Now let $t=2n$.  For every $q\in\mathcal P_{n-1}$,
\[
 0\le\int(\alpha-s)(1+s)q(s)^2\dd\nu(s)
 =\int(\alpha-s)(1+s)q(s)^2\dd\mu_d(s).
\]
Thus $\alpha$ dominates the largest Rayleigh quotient of multiplication by $s$ on $\mathcal P_{n-1}$ in $L^2((1+s)\mu_d)$, namely the largest zero of $p_n^+$.  Equality forces every support point to be either $-1$, the largest zero, or a zero of the corresponding extremal polynomial.  The Christoffel--Darboux argument identifies the remaining points with the other zeros of $p_n^+$, and degree-$2n$ exactness gives the unique Gauss--Radau weights.  \qedhere
\end{proof}

\begin{corollary}[Fazekas--Levenshtein covering form]\label{cor:covering}
For every spherical $t$-design $X\subset\Sph^{d-1}$ and every $u\in\Sph^{d-1}$,
\begin{equation}\label{eq:covering}
 \max_j\ip{u}{x_j}\ge\eta_{t,d}.
\end{equation}
Equivalently, the angular covering radius is at most $\arccos\eta_{t,d}$.
\end{corollary}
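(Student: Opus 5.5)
The plan is to reduce \cref{cor:covering} directly to \cref{thm:FLselfcontained}, applied to the one-dimensional empirical measure $\nu_u$. Fix $u\in\Sph^{d-1}$ and set
\[
 \alpha:=\max_j\ip{u}{x_j}.
\]
Since every $x_j$ and $u$ are unit vectors, Cauchy--Schwarz gives $\ip{u}{x_j}\in[-1,1]$. By the definition of $\alpha$, all atoms of $\nu_u=N^{-1}\sum_j\delta_{\ip{u}{x_j}}$ lie in $[-1,\alpha]$. Hence $\nu_u$ is a probability measure supported on $[-1,\alpha]$.

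The only substantive input is the moment identity \eqref{eq:momentmatch1d}. To verify it, take a polynomial $q$ in one variable of degree at most $t$. Then $x\mapsto q(\ip{u}{x})$ is a polynomial on $\R^d$ of total degree at most $t$, so the design identity \eqref{eq:design} gives
\[
 \int q\dd\nu_u=\int_{\Sph^{d-1}}q(\ip{u}{x})\dd\sigma(x).
\]
By rotation invariance of $\sigma$, the right-hand side equals the integral of $q$ against the law of the first coordinate, which is $\mu_d$ from \eqref{eq:marginal}. Thus $\nu_u$ satisfies the hypotheses of \cref{thm:FLselfcontained}, and \eqref{eq:alphageeta} yields $\max_j\ip{u}{x_j}=\alpha\ge\eta_{t,d}$. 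This is \eqref{eq:covering}.

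For the covering reformulation, the angular distance from $u$ to $X$ is $\min_j\arccos\ip{u}{x_j}$. Because $\arccos$ is decreasing, this equals $\arccos(\max_j\ip{u}{x_j})$, which is at most $\arccos\eta_{t,d}$. Taking the supremum over $u$ shows that the angular covering radius is at most $\arccos\eta_{t,d}$.

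For the converse direction of the stated equivalence, suppose the covering radius is at most $\arccos\eta_{t,d}$. Then for every $u$ some $x_j$ lies within angle $\arccos\eta_{t,d}$ of $u$, and since $\arccos$ is injective on $[-1,1]$ this gives \eqref{eq:covering}. The only point needing care is that $\eta_{t,d}\in(-1,1)$. This holds because zeros of orthogonal polynomials for a measure on $[-1,1]$ with infinite support lie in the open interval. There is no real obstacle in this argument: everything rests on the moment identity \eqref{eq:momentmatch1d} and on \cref{thm:FLselfcontained}.
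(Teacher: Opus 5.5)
Your proposal is correct and matches the paper's proof: the paper applies \cref{thm:FLselfcontained} to $\nu_u$ with $\alpha=\max_j\ip{u}{x_j}$, using \eqref{eq:momentmatch1d}. You also verify the moment identity via rotation invariance and check both directions of the covering-radius reformulation (using that $\arccos$ is strictly decreasing and $\eta_{t,d}\in(-1,1)$), which are details the paper leaves implicit.
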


\begin{proof}
Apply \cref{thm:FLselfcontained} to \eqref{eq:momentmatch1d} with $\alpha=\max_j\ip{u}{x_j}$.  \qedhere
\end{proof}

\subsection{Self-polarity turns quadrature weights into face numbers}

\begin{theorem}[Quadrature-combinatorics rigidity]\label{thm:quadcomb}
Let $X\subset\Sph^{d-1}$ be a node-transitive spherical $t$-design, $t\ge2$, and let its Steiner-normalized Minkowski polytope satisfy
\[
 P_X^\circ=cUP_X.
\]
Let $r$ and $R$ be the centered inradius and circumradius of $P_X$.  Then
\begin{equation}\label{eq:rRbound}
 \boxed{\frac rR=ch^2\ge\eta_{t,d}.}
\end{equation}

If equality holds, then for each facet $F_i$ the multiset
\begin{equation}\label{eq:rowmultiset}
 \{\ip{Ux_i}{x_j}:1\le j\le N\}
\end{equation}
is exactly the Gaussian quadrature multiset when $t$ is odd and the Gauss--Radau multiset when $t$ is even.  In particular, if $\lambda_+$ is the quadrature weight of the largest node $\eta_{t,d}$, then
\begin{equation}\label{eq:facetsize}
 \boxed{\#\Verts(F_i)=N\lambda_+.}
\end{equation}
More generally, every quadrature weight satisfies the arithmetic constraint
\begin{equation}\label{eq:integrality}
 N\lambda_k\in\mathbb Z.
\end{equation}
\end{theorem}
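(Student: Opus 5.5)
The plan is to reduce everything to \cref{thm:FLselfcontained} applied row by row, with the row measure $\nu_{Ux_i}$ as input. First, by \cref{prop:constant-h}, node-transitivity gives a common support number $h$. The discussion preceding \cref{prop:alpha} then identifies $r=h$ and $R=(ch)^{-1}$, so $r/R=ch^2=\alpha$.

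Fix a row index $i$ and put $u=Ux_i\in\Sph^{d-1}$. By \eqref{eq:momentmatch1d}, the empirical measure
\[
 \nu_u=\frac1N\sum_{j=1}^N\delta_{\ip{Ux_i}{x_j}}
\]
is a probability measure whose moments agree with $\mu_d$ through degree $t$. By \cref{prop:alpha} it is supported in $[-1,\alpha]$, since all entries are inner products of unit vectors and $\max_j\ip{Ux_i}{x_j}=\alpha$. \Cref{thm:FLselfcontained} then gives $\alpha\ge\eta_{t,d}$, which is \eqref{eq:rRbound}. Equivalently, this is \cref{cor:covering} applied at $u=Ux_i$.

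For the equality case, assume $\alpha=\eta_{t,d}$. The equality clause of \cref{thm:FLselfcontained}, applied to each $\nu_{Ux_i}$, says that $\nu_{Ux_i}$ equals the Gaussian quadrature measure (for $t$ odd) or the Gauss--Radau measure (for $t$ even): it equals $\sum_k\lambda_k\delta_{\xi_k}$ with all $\lambda_k>0$. Comparing atoms, the number of indices $j$ with $\ip{Ux_i}{x_j}=\xi_k$ is $N\lambda_k$. This proves that the row multiset \eqref{eq:rowmultiset} is the quadrature multiset with multiplicities $N\lambda_k$, and integrality \eqref{eq:integrality} is immediate because these are counts.

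For \eqref{eq:facetsize}, \cref{prop:alpha} gives $y_j\in F_i$ if and only if $\ip{Ux_i}{x_j}=\alpha=\eta_{t,d}=\xi_n$. By \cref{thm:bidual} the $y_j$ are exactly the $N$ distinct vertices of $P_X$, and they are distinct because the polar vertices $x_j/h$ are distinct. So $\#\Verts(F_i)$ equals the multiplicity of the top node, namely $N\lambda_+$.

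The only delicate point is using \cref{thm:FLselfcontained} with $\nu$ an empirical measure whose atoms may coincide, and reading off multiplicities from the masses. Since all atoms of $\nu_u$ have mass a multiple of $1/N$, this is immediate, so I expect no real obstacle beyond checking that $\nu_u$ satisfies the hypotheses of that theorem.
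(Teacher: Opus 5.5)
Your proposal is correct and follows essentially the same route as the paper: you identify $r/R=\alpha=\max_j\ip{Ux_i}{x_j}$ via \cref{prop:alpha}, apply \cref{cor:covering} (equivalently \cref{thm:FLselfcontained}) to each row measure, and read the multiplicities $N\lambda_k$ and the facet size $N\lambda_+$ off the equality clause. Your check that the $y_j$ are distinct, so that the top-node multiplicity really counts vertices of $F_i$, is a small point the paper leaves implicit.
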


\begin{proof}
By \cref{prop:alpha}, for each $i$,
\[
 \frac rR=\alpha=\max_j\ip{Ux_i}{x_j}.
\]
Since $Ux_i$ is a unit vector, \cref{cor:covering} gives \eqref{eq:rRbound}.

If equality holds, apply the equality statement of \cref{thm:FLselfcontained} to the empirical measure
\[
 \frac1N\sum_j\delta_{\ip{Ux_i}{x_j}}.
\]
Thus a fraction $\lambda_k$ of the $N$ entries equals the $k$th quadrature node.  The maximum node is precisely the equality level in the facet incidence condition of \cref{prop:alpha}, so its multiplicity is the number of vertices on $F_i$.  This proves \eqref{eq:facetsize}; all multiplicities are integers, giving \eqref{eq:integrality}.  \qedhere
\end{proof}

\begin{remark}[Geometric meaning of equality]
When $r/R=\eta_{t,d}$, every direction $Ux_i$ is a deepest-hole direction for the covering problem of $X$.  Thus equality simultaneously identifies the self-polar dual directions, the extremizers of the covering bound, and the top Gaussian/Radau node.
\end{remark}

For reference, the first few thresholds are
\begin{equation}\label{eq:smallthresholds}
 \eta_{2,d}=\frac1d,
 \qquad
 \eta_{3,d}=\frac1{\sqrt d},
 \qquad
 \eta_{5,d}=\sqrt{\frac3{d+2}}.
\end{equation}
For $t=5$ the Gaussian weights at the three nodes
$-\eta_{5,d},0,\eta_{5,d}$ are
\begin{equation}\label{eq:t5weights}
 \lambda_- =\lambda_+=\frac{d+2}{6d},
 \qquad
 \lambda_0=\frac{2(d-1)}{3d}.
\end{equation}
These follow from symmetry and the second spherical moment.

\section{Quantitative stability near the quadrature threshold}
\label{sec:stability}

The equality statement in \cref{thm:quadcomb} is discrete: an extremal row is supported exactly on the Gaussian or Gauss--Radau nodes.  We now show that this phenomenon is quantitatively stable.  The argument is a robust version of the one-dimensional moment proof in \cref{thm:FLselfcontained}.

Let
\[
 \Xi_{t,d}=\{\xi_0<\xi_1<\cdots<\xi_n=\eta_{t,d}\}
\]
denote the extremal quadrature node set from \cref{thm:FLselfcontained}, and let
\[
 \nu_{t,d}^*:=\sum_{k=0}^n\lambda_k\delta_{\xi_k}
\]
be the corresponding Gaussian or Gauss--Radau probability measure.  We write $W_1$ for the Wasserstein distance on $[-1,1]$ with cost $|s-r|$.

The following elementary coercivity lemma is the key point.

\begin{lemma}[Coercivity of the extremal certificate]\label{lem:certificate-coercive}
There exist constants $\varepsilon_0,c_0>0$, depending only on $d$ and $t$, with the following property.  For every
\[
 \eta_{t,d}\le\alpha\le\eta_{t,d}+\varepsilon_0,
\]
there is a node set $\Xi_\alpha$ obtained from $\Xi_{t,d}$ by replacing its largest node $\eta_{t,d}$ with $\alpha$, and a nonnegative polynomial $\Phi_\alpha$ on $[-1,\alpha]$ such that
\begin{equation}\label{eq:certificate-coercive}
 \Phi_\alpha(s)\ge c_0\,\operatorname{dist}(s,\Xi_\alpha)^2,
 \qquad -1\le s\le\alpha.
\end{equation}
Moreover, if $\delta:=\alpha-\eta_{t,d}$ and $\nu$ is any probability measure supported on $[-1,\alpha]$ whose moments agree with $\mu_d$ through degree $t$, then
\begin{equation}\label{eq:certificate-defect}
 \int\Phi_\alpha\,d\nu=\delta.
\end{equation}
\end{lemma}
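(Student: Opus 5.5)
We construct the certificate explicitly from the equality polynomial in the proof of \cref{thm:FLselfcontained}, keeping all nodes except the top one fixed.

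\textbf{Odd strength, $t=2n+1$.} Put
\[
 q(s)=\kappa\prod_{k=0}^{n-1}(s-\xi_k),
\]
with $\kappa>0$ chosen so that $\int q^2\dd\mu_d=1$. Define
\[
 \Phi_\alpha(s)=(\alpha-s)\,q(s)^2 .
\]
This polynomial has degree $2n+1=t$ and is nonnegative on $[-1,\alpha]$. The polynomial $(\eta_{t,d}-s)q(s)^2$ vanishes at every Gaussian node $\xi_0,\dots,\xi_n$. Gaussian quadrature is exact in degree $2n+1$, so $\int(\eta_{t,d}-s)q^2\dd\mu_d=0$. Hence
\[
 \int\Phi_\alpha\dd\mu_d=(\alpha-\eta_{t,d})\int q^2\dd\mu_d=\delta .
\]
Since $\Phi_\alpha$ has degree at most $t$ and $\nu$ matches $\mu_d$ through degree $t$, this gives \eqref{eq:certificate-defect}.

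\textbf{Even strength, $t=2n$.} Use
\[
 \Phi_\alpha(s)=(\alpha-s)(1+s)\,q(s)^2,\qquad q(s)=\kappa\prod_{k=1}^{n-1}(s-\xi_k).
\]
This has degree $2n$ and is nonnegative on $[-1,\alpha]$. Here $\kappa$ normalizes $\int(1+s)q^2\dd\mu_d=1$. The same computation goes through with Gauss--Radau exactness in degree $2n$: $(\eta_{t,d}-s)(1+s)q^2$ vanishes at all Radau nodes, including $-1$, so its $\mu_d$-integral is zero.

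\textbf{Coercivity \eqref{eq:certificate-coercive}.} Take $\varepsilon_0$ smaller than, say, a quarter of the minimal gap of $\Xi_{t,d}\cup\{1\}$, so the node configuration $\Xi_\alpha$ stays uniformly separated. Also shrink $\varepsilon_0$ if needed so that $\alpha\le1$, since the argument below uses $\alpha-s\le 2$. Split $[-1,\alpha]$ into three kinds of region.

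\begin{enumerate}[label=\textup{(\alph*)}]
\item \emph{Near a fixed interior node $\xi_k$}, i.e.\ a root of $q$. Here $q(s)^2\ge c\,(s-\xi_k)^2$, because the other factors of $q$ are bounded away from zero. The remaining factor $\alpha-s$ is at least $\tfrac12(\alpha-\xi_{n-1})$, and in the even case $1+s$ is bounded below similarly.
\item \emph{Near the top node $\alpha$}, and in the even case also near $-1$. Here $q^2$ is bounded below. The linear factor satisfies $\alpha-s\ge(\alpha-s)^2/2$ because $0\le\alpha-s\le2$, and likewise $1+s\ge(1+s)^2/2$. So linear vanishing dominates quadratic vanishing, and $\operatorname{dist}(s,\Xi_\alpha)^2$ is controlled.
\item \emph{On the complement of these neighbourhoods.} $\Phi_\alpha$ is a positive continuous function and $\operatorname{dist}^2\le4$, so a lower bound follows.
\end{enumerate}

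The main obstacle is making $c_0$ independent of $\alpha\in[\eta_{t,d},\eta_{t,d}+\varepsilon_0]$. We plan to handle it by noting that the factor $q$ does not depend on $\alpha$ at all. The only $\alpha$-dependence sits in the linear factor $\alpha-s$, whose lower bounds on each region are monotone in $\alpha$.

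Alternatively, one can run a compactness argument on the continuous function
\[
 (\alpha,s)\mapsto\frac{\Phi_\alpha(s)}{\operatorname{dist}(s,\Xi_\alpha)^2}
\]
over the compact set $\{(\alpha,s):\eta_{t,d}\le\alpha\le\eta_{t,d}+\varepsilon_0,\ -1\le s\le\alpha\}$. This function extends continuously and positively across the nodes, by the local analysis in (a) and (b), so its minimum is a positive $c_0$ depending only on $d$ and $t$.
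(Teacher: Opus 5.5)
Your proposal is correct and follows the paper's proof. It uses the same certificate $(\alpha-s)q^2$, resp.\ $(\alpha-s)(1+s)q^2$, with $q$ vanishing at the fixed lower nodes, moment matching for the defect identity, and a local-plus-compactness argument for uniform coercivity; your check of $\int(\eta_{t,d}-s)q^2\,d\mu_d=0$ via quadrature exactness is a slightly more direct substitute for the paper's Rayleigh-quotient normalization. One caveat on your compactness alternative: the ratio $\Phi_\alpha/\operatorname{dist}(\cdot,\Xi_\alpha)^2$ does not extend continuously at the moving node $\alpha$ (or at $-1$ in the even case) but blows up there, so it is only lower semicontinuous with values in $(0,+\infty]$ --- your region-by-region argument (a)--(c) already covers this.
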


\begin{proof}
Suppose first that $t=2n+1$.  Let $q_*$ be the degree-$n$ extremizer in the Rayleigh quotient appearing in the proof of \cref{thm:FLselfcontained}, normalized by
\[
 \int q_*^2\,d\mu_d=1,
 \qquad
 \int s q_*^2\,d\mu_d=\eta_{t,d}.
\]
Equivalently, $q_*$ is proportional to $p_{n+1}(s)/(s-\eta_{t,d})$.  Put
\[
 \Phi_\alpha(s):=(\alpha-s)q_*(s)^2.
\]
Its zeros on $[-1,\alpha]$ are the $n$ zeros of $q_*$ together with $\alpha$; at $\alpha=\eta_{t,d}$ these are precisely the Gaussian nodes.  Moment matching through degree $2n+1$ gives
\[
 \int\Phi_\alpha\,d\nu
 =\int\Phi_\alpha\,d\mu_d
 =\alpha-\eta_{t,d}.
\]

For $t=2n$, let $q_*$ be the degree-$(n-1)$ extremizer for multiplication by $s$ in $L^2((1+s)\mu_d)$, normalized by
\[
 \int(1+s)q_*^2\,d\mu_d=1,
 \qquad
 \int s(1+s)q_*^2\,d\mu_d=\eta_{t,d}.
\]
Then $q_*$ is proportional to $p_n^+(s)/(s-\eta_{t,d})$, and we set
\[
 \Phi_\alpha(s):=(\alpha-s)(1+s)q_*(s)^2.
\]
Its zero set consists of $-1$, the zeros of $q_*$, and $\alpha$, which at $\alpha=\eta_{t,d}$ is exactly the Gauss--Radau node set.  Moment matching through degree $2n$ again gives \eqref{eq:certificate-defect}.

It remains to prove \eqref{eq:certificate-coercive}.  Choose $\varepsilon_0>0$ smaller than one quarter of the minimal separation between distinct nodes of $\Xi_{t,d}$ and smaller than $1-\eta_{t,d}$.  The zeros of $q_*$ are simple.  Hence, at every fixed interior node $\zeta$ of $\Xi_{t,d}$, the quotient
\[
 \frac{\Phi_\alpha(s)}{(s-\zeta)^2}
\]
extends continuously to $s=\zeta$, and its limiting value is strictly positive, uniformly for $\alpha\in[\eta_{t,d},\eta_{t,d}+\varepsilon_0]$.  At the moving node $s=\alpha$ the polynomial vanishes linearly, so $\Phi_\alpha(s)/(\alpha-s)^2\to+\infty$ as $s\uparrow\alpha$; in the even case the same statement holds at the fixed endpoint $-1$.  On the complement of small disjoint neighborhoods of these nodes, $\Phi_\alpha$ is strictly positive and depends continuously on $(\alpha,s)$ over a compact set.  Taking the minimum of the resulting positive lower bounds gives a constant $c_0>0$ such that
\[
 \Phi_\alpha(s)\ge c_0\,\operatorname{dist}(s,\Xi_\alpha)^2
\]
uniformly in the stated range of $\alpha$.  This proves \eqref{eq:certificate-coercive}.  \qedhere
\end{proof}

\begin{theorem}[Stable extremal quadrature]\label{thm:stable-quadrature}
Fix $d\ge2$ and $t\ge2$.  There exist constants $\varepsilon_0,C>0$, depending only on $d$ and $t$, such that the following holds.  Let $\nu$ be a probability measure supported on $[-1,\alpha]$, where
\[
 \eta_{t,d}\le\alpha\le\eta_{t,d}+\varepsilon_0,
 \qquad
 \delta:=\alpha-\eta_{t,d},
\]
and suppose that $\nu$ and $\mu_d$ have the same moments through degree $t$.  Then
\begin{equation}\label{eq:mean-square-clustering}
 \boxed{
 \int \operatorname{dist}(s,\Xi_{t,d})^2\,d\nu(s)
 \le C\delta,}
\end{equation}
and
\begin{equation}\label{eq:W1-stable}
 \boxed{
 W_1(\nu,\nu_{t,d}^*)\le C\sqrt\delta.}
\end{equation}

More precisely, choose the nearest-node projection from $[-1,\alpha]$ onto the moving set $\Xi_\alpha$ of \cref{lem:certificate-coercive}, and let $E_k(\alpha)$ be its Voronoi cells.  If
\[
 w_k:=\nu(E_k(\alpha)),
\]
with the top cell indexed so that it is mapped from $\alpha$ back to $\eta_{t,d}$, then
\begin{equation}\label{eq:weight-stability}
 \boxed{
 \sum_{k=0}^n|w_k-\lambda_k|\le C\sqrt\delta.}
\end{equation}
\end{theorem}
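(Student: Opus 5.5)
The proof rests on \cref{lem:certificate-coercive}. Take $\varepsilon_0$ and $c_0$ from that lemma. Combining \eqref{eq:certificate-coercive} with \eqref{eq:certificate-defect} gives
\[
 c_0\int\operatorname{dist}(s,\Xi_\alpha)^2\,d\nu(s)\le\int\Phi_\alpha\,d\nu=\delta .
\]
This is the mean-square clustering estimate, except that it refers to the moving set $\Xi_\alpha$. The two node sets differ only in the top node, which moves by $\delta$. Hence
\[
 \operatorname{dist}(s,\Xi_{t,d})\le\operatorname{dist}(s,\Xi_\alpha)+\delta
 \qquad\text{for all } s,
\]
and so
\[
 \int\operatorname{dist}(s,\Xi_{t,d})^2\,d\nu\le 2\delta/c_0+2\delta^2\le C\delta,
\]
using $\delta\le\varepsilon_0$. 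This proves \eqref{eq:mean-square-clustering}.

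Next we prove the weight stability \eqref{eq:weight-stability}. Let $T$ be the nearest-node projection onto $\Xi_\alpha$, and let $\tilde\nu=T_\#\nu=\sum_k w_k\delta_{\xi_k(\alpha)}$, where $\xi_k(\alpha)=\xi_k$ except for the top node, which is $\alpha$. By Cauchy--Schwarz,
\[
 W_1(\nu,\tilde\nu)\le\int|s-Ts|\,d\nu\le\Bigl(\int\operatorname{dist}(s,\Xi_\alpha)^2\,d\nu\Bigr)^{1/2}\le\sqrt{\delta/c_0}.
\]
To compare the weights $w_k$ with the $\lambda_k$ we use moment rigidity of discrete measures on a fixed $(n+1)$-point set. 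Let $\ell_k$ be the Lagrange basis polynomials for $\Xi_{t,d}$. Their degree is $n\le t$: the Gaussian case $t=2n+1$ and the Radau case $t=2n$ both have $n+1$ nodes. Quadrature exactness gives
\[
 \int\ell_k\,d\mu_d=\lambda_k,
\]
and moment matching gives $\int\ell_k\,d\nu=\lambda_k$. The polynomials $\ell_k$ are Lipschitz on $[-1,1]$ with constant $L_{d,t}$. Therefore
\[
 \Bigl|\int\ell_k\,d\tilde\nu-\lambda_k\Bigr|\le L\,W_1(\nu,\tilde\nu)\le C\sqrt\delta .
\]
On the other hand, $\int\ell_k\,d\tilde\nu=\sum_j w_j\ell_k(\xi_j(\alpha))$, and $\ell_k(\xi_j(\alpha))=\delta_{jk}+O(\delta)$ since only the top node has moved. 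Hence
\[
 |w_k-\lambda_k|\le C\sqrt\delta+C\delta\le C'\sqrt\delta .
\]
Summing over the $n+1$ indices $k$ gives \eqref{eq:weight-stability}.

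For the $W_1$ bound \eqref{eq:W1-stable} we use the triangle inequality:
\[
 W_1(\nu,\nu^*_{t,d})\le W_1(\nu,\tilde\nu)+W_1\Bigl(\tilde\nu,\sum_k w_k\delta_{\xi_k}\Bigr)+W_1\Bigl(\sum_k w_k\delta_{\xi_k},\nu^*_{t,d}\Bigr).
\]
The first term is $O(\sqrt\delta)$ by the estimate above. The second term is at most $\delta$, because it only moves mass $w_n$ from $\alpha$ to $\eta_{t,d}$. The third term compares two measures on the same finite set in $[-1,1]$, so it is at most $2\sum_k|w_k-\lambda_k|=O(\sqrt\delta)$. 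Together these give \eqref{eq:W1-stable}.

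The main obstacle is already packaged into the uniform coercivity constant of \cref{lem:certificate-coercive}. Beyond that lemma, the only care needed is to keep all constants uniform in $\alpha\in[\eta_{t,d},\eta_{t,d}+\varepsilon_0]$. This holds because the Lipschitz constants of the $\ell_k$ and the node separation depend only on $d$ and $t$, and $\varepsilon_0$ is below a quarter of the node separation, so the Voronoi cells of $\Xi_\alpha$ remain well defined and are indexed consistently with those of $\Xi_{t,d}$.
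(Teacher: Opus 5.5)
Your proposal is correct and follows essentially the paper's route. The coercive certificate gives the mean-square bound relative to $\Xi_\alpha$, the nearest-node projection with Cauchy--Schwarz gives $W_1$ control, and the weights are recovered from moment rigidity on the finite node set. Your Lagrange-basis comparison against the moving-node measure, with the $O(\delta)$ correction at the top node, is just the paper's Vandermonde inversion (applied after moving the top atom back to $\eta_{t,d}$) written in a different basis.
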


\begin{proof}
By \cref{lem:certificate-coercive},
\[
 c_0\int\operatorname{dist}(s,\Xi_\alpha)^2\,d\nu(s)
 \le\int\Phi_\alpha\,d\nu
 =\delta.
\]
Let $\pi_\alpha$ be a measurable nearest-node projection and set
\[
 \widetilde\nu:=(\pi_\alpha)_\#\nu.
\]
The coupling $s\mapsto\pi_\alpha(s)$ gives
\begin{equation}\label{eq:projection-W1}
 W_1(\nu,\widetilde\nu)
 \le\left(\int|s-\pi_\alpha(s)|^2\,d\nu(s)\right)^{1/2}
 \le C\sqrt\delta.
\end{equation}
Move the top atom of $\widetilde\nu$ from $\alpha$ to $\eta_{t,d}$ and leave the remaining atoms fixed.  Denote the resulting measure by
\[
 \widehat\nu=\sum_{k=0}^n w_k\delta_{\xi_k}.
\]
Then
\[
 W_1(\widetilde\nu,\widehat\nu)\le\delta,
\]
so
\begin{equation}\label{eq:nu-hat-W1}
 W_1(\nu,\widehat\nu)\le C\sqrt\delta.
\end{equation}

It remains to compare the weights $w_k$ with the quadrature weights.  Let
\[
 V=(\xi_k^\ell)_{0\le\ell,k\le n}
\]
be the Vandermonde matrix of the distinct quadrature nodes.  Since $V$ is invertible, it suffices to compare the first $n$ moments.  For $1\le\ell\le n$, the function $s\mapsto s^\ell$ is $\ell$-Lipschitz on $[-1,1]$.  Hence, using moment matching of $\nu$ and exactness of $\nu_{t,d}^*$,
\[
 \left|\sum_k(w_k-\lambda_k)\xi_k^\ell\right|
 =\left|\int s^\ell\,d(\widehat\nu-\nu)\right|
 \le \ell W_1(\widehat\nu,\nu)
 \le C\sqrt\delta.
\]
The zeroth moment difference is zero.  Applying $V^{-1}$ proves \eqref{eq:weight-stability}.  Since two probability measures on the same finite node set have Wasserstein distance bounded by their $\ell^1$ weight difference, \eqref{eq:W1-stable} follows from \eqref{eq:nu-hat-W1} and \eqref{eq:weight-stability}.  Finally, $\operatorname{dist}(s,\Xi_{t,d})\le \operatorname{dist}(s,\Xi_\alpha)+\delta$.  Hence, after squaring and integrating, the bound for $\Xi_\alpha$ gives \eqref{eq:mean-square-clustering} (shrinking $\varepsilon_0$ so that $\delta\le1$).  \qedhere
\end{proof}

We now return to self-polar Minkowski design polytopes.  In the node-transitive setting, write
\[
 b_{ij}:=\ip{Ux_i}{x_j},
 \qquad
 \alpha=\frac rR,
 \qquad
 \delta:=\alpha-\eta_{t,d}.
\]
Recall that $A_{ij}=\alpha-b_{ij}$ is the dimensionless scaled slack matrix from \cref{thm:slack}; indeed, if $S$ denotes the usual slack matrix, then $A=ch\,S$ in the node-transitive case.

\begin{theorem}[Near-quadrature implies near-incidence rigidity]\label{thm:near-incidence}
Fix $d\ge2$ and $t\ge2$.  There exist constants $\varepsilon_0,C,c>0$, depending only on $d$ and $t$, such that every node-transitive self-polar Minkowski polytope satisfying
\[
 0\le\delta:=\frac rR-\eta_{t,d}\le\varepsilon_0
\]
has the following properties for every facet row $i$.

\begin{enumerate}[label=\textup{(\roman*)}]
\item The empirical row measure
\[
 \nu_i:=\frac1N\sum_{j=1}^N\delta_{b_{ij}}
\]
satisfies
\begin{equation}\label{eq:row-W1}
 W_1(\nu_i,\nu_{t,d}^*)\le C\sqrt\delta,
\end{equation}
and
\begin{equation}\label{eq:row-cluster}
 \frac1N\sum_{j=1}^N\operatorname{dist}(b_{ij},\Xi_{t,d})^2\le C\delta.
\end{equation}

\item Let $E_k(\alpha)$ be the moving Voronoi cells from \cref{thm:stable-quadrature} and define the integer cluster multiplicities
\[
 m_{ik}:=\#\{j:b_{ij}\in E_k(\alpha)\}.
\]
Then
\begin{equation}\label{eq:cluster-multiplicity}
 \boxed{
 \left|\frac{m_{ik}}N-\lambda_k\right|\le C\sqrt\delta
 \qquad(0\le k\le n).}
\end{equation}

\item If $\lambda_+$ is the weight of the largest quadrature node, then
\begin{equation}\label{eq:near-incidence-count}
 \boxed{
 \left|
 \frac1N\#\{j:A_{ij}\le C\sqrt\delta\}-\lambda_+
 \right|\le C\sqrt\delta.}
\end{equation}
Thus the top quadrature weight controls not only exact incidences at equality but also the number of vertices lying within $O_{d,t}(\sqrt\delta)$ normalized slack of the facet.
\end{enumerate}

Finally, there is an arithmetic stability gap:
\begin{equation}\label{eq:arithmetic-gap}
 \boxed{
 \delta\ge
 \frac{c}{N^2}
 \max_{0\le k\le n}
 \operatorname{dist}(N\lambda_k,\mathbb Z)^2.}
\end{equation}
\end{theorem}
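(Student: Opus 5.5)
The plan is to reduce everything to \cref{thm:stable-quadrature}, applied row by row. Fix a facet row $i$. By \cref{prop:alpha}, every entry satisfies $b_{ij}=\ip{Ux_i}{x_j}\le\alpha=r/R$. Since $Ux_i$ is a unit vector, \eqref{eq:momentmatch1d} shows that $\nu_i$ is a probability measure on $[-1,\alpha]$ whose moments agree with those of $\mu_d$ through degree $t$. Taking $\varepsilon_0$ as in \cref{thm:stable-quadrature}, the hypothesis $0\le\delta\le\varepsilon_0$ then gives the following directly:
\begin{itemize}
\item \eqref{eq:row-W1} and \eqref{eq:row-cluster} from \eqref{eq:W1-stable} and \eqref{eq:mean-square-clustering};
\item \eqref{eq:cluster-multiplicity} from \eqref{eq:weight-stability}, because $w_k=\nu_i(E_k(\alpha))=m_{ik}/N$.
\end{itemize}
This settles (i) and (ii), with constants depending only on $d,t$.

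For (iii), use $A_{ij}=\alpha-b_{ij}$. The top Voronoi cell $E_n(\alpha)$ of the moving node set $\Xi_\alpha$ is an interval $(\tfrac12(\xi_{n-1}+\alpha),\alpha]$, where the gap $\alpha-\xi_{n-1}$ is bounded below by a constant $g_{d,t}>0$. I compare the count $\#\{j:A_{ij}\le C\sqrt\delta\}$ with $m_{in}$ and bound the two sides of the symmetric difference separately.
\begin{itemize}
\item Vertices with $A_{ij}\le C\sqrt\delta$ lie in $E_n(\alpha)$ once $C\sqrt{\varepsilon_0}<g_{d,t}/2$, which I arrange by shrinking $\varepsilon_0$. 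So the count is at most $m_{in}$.
\item The vertices in $E_n(\alpha)$ with $A_{ij}>C'\sqrt\delta$ are at distance at least $\min(C'\sqrt\delta,g/2)$ from $\Xi_\alpha$. By Chebyshev applied to the mean-square bound $\frac1N\sum_j\operatorname{dist}(b_{ij},\Xi_\alpha)^2\le\delta/c_0$ from \cref{lem:certificate-coercive}, their proportion is at most $\delta/(c_0C'^2\delta)=1/(c_0C'^2)$. That is not small.
\end{itemize}
This second bound is the main obstacle: a threshold of order $\sqrt\delta$ with Chebyshev yields only an $O(1)$ error, not $O(\sqrt\delta)$. My first fix is to choose the threshold $\delta^{1/4}$, which gives error $\sqrt\delta$ but weakens the slack scale. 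To keep the stated scale $\sqrt\delta$, I instead use the linear vanishing of $\Phi_\alpha$ at the moving node $\alpha$ (noted in the proof of \cref{lem:certificate-coercive}): near $\alpha$ one has $\Phi_\alpha(s)\ge c_1(\alpha-s)$, while away from the nodes the quadratic bound holds. This gives $\frac1N\sum_{j:b_{ij}\in E_n}A_{ij}\le C\delta$, and Markov at threshold $\sqrt\delta$ then bounds the exceptional proportion by $C\sqrt\delta$. Combining this with \eqref{eq:cluster-multiplicity} for $k=n$ yields \eqref{eq:near-incidence-count}, after enlarging $C$ so that one constant serves in both places.

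For the arithmetic gap \eqref{eq:arithmetic-gap}, since $m_{ik}$ is an integer, $\operatorname{dist}(N\lambda_k,\mathbb Z)\le|m_{ik}-N\lambda_k|\le CN\sqrt\delta$ by \eqref{eq:cluster-multiplicity}; squaring gives $\delta\ge C^{-2}N^{-2}\operatorname{dist}(N\lambda_k,\mathbb Z)^2$ whenever $\delta\le\varepsilon_0$. If instead $\delta>\varepsilon_0$, the inequality holds trivially with $c\le 4\varepsilon_0$, because $\operatorname{dist}\le1/2$ and $N\ge1$. Taking $c=\min(C^{-2},4\varepsilon_0)$ covers both cases.
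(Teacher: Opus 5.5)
Your proposal is correct and follows the paper's proof essentially step for step. You obtain (i)--(ii) by applying \cref{thm:stable-quadrature} row by row with $w_k=m_{ik}/N$, (iii) from the linear bound $\Phi_\alpha(s)\ge c_1(\alpha-s)$ on the top Voronoi cell plus a Markov estimate at scale $\sqrt\delta$, and the arithmetic gap from integrality of $m_{ik}$; your extra case $\delta>\varepsilon_0$ is harmless but unnecessary, since the theorem assumes $\delta\le\varepsilon_0$.
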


\begin{proof}
For fixed $i$, the vector $Ux_i$ is a unit vector and $X$ is a spherical $t$-design.  Hence the empirical measure $\nu_i$ has the same moments as $\mu_d$ through degree $t$.  By \cref{prop:alpha}, its support lies in $[-1,\alpha]$ and its maximum is $\alpha$.  Therefore \cref{thm:stable-quadrature} gives \eqref{eq:row-W1}, \eqref{eq:row-cluster}, and \eqref{eq:cluster-multiplicity}.

It remains to sharpen the top cluster into a near-incidence statement.  Let $E_+(\alpha)$ be the Voronoi cell of the moving top node $\alpha$.  Because the next quadrature node is separated from $\eta_{t,d}$ by a positive gap, after decreasing $\varepsilon_0$ if necessary there is a constant $c_1>0$ such that on $E_+(\alpha)$ the extremal certificate from \cref{lem:certificate-coercive} satisfies
\begin{equation}\label{eq:linear-top-coercivity}
 \Phi_\alpha(s)\ge c_1(\alpha-s).
\end{equation}
Indeed, the auxiliary factor $q_*^2$ in the odd case, and $(1+s)q_*^2$ in the even case, is bounded below on the top Voronoi cell.  Consequently, for every $\rho>0$ smaller than one quarter of the top-node separation,
\[
 \nu_i\bigl(E_+(\alpha)\cap\{\alpha-s>\rho\}\bigr)
 \le \frac{\delta}{c_1\rho}.
\]
Taking $\rho=C_0\sqrt\delta$ with $C_0$ fixed and $\delta$ sufficiently small, points with $\alpha-s\le\rho$ lie in $E_+(\alpha)$.  Combining the preceding estimate with \eqref{eq:cluster-multiplicity} for the top cluster yields \eqref{eq:near-incidence-count}, because $A_{ij}=\alpha-b_{ij}$.

For the arithmetic statement, each $m_{ik}$ is an integer and \eqref{eq:cluster-multiplicity} gives
\[
 \operatorname{dist}(N\lambda_k,\mathbb Z)
 \le |N\lambda_k-m_{ik}|
 \le CN\sqrt\delta.
\]
Squaring and maximizing over $k$ proves \eqref{eq:arithmetic-gap}.  \qedhere
\end{proof}

\begin{remark}[What is new in the stability statement]\label{rem:stability-novelty}
Equality cases of the Fazekas--Levenshtein bound, especially for odd strength, have a substantial theory; see \cite{Borodachov2024} and the references therein.  The point of \cref{thm:stable-quadrature,thm:near-incidence} is different.  They quantify how a small support-threshold defect forces the entire one-dimensional moment measure to approach the extremal quadrature rule, and then use self-polar Minkowski geometry to convert that analytic stability into a statement about near-zero entries of a polytope slack matrix.  The arithmetic gap \eqref{eq:arithmetic-gap} is the quantitative counterpart of the exact integrality condition in \eqref{eq:integrality}.
\end{remark}

\section{Rigidity consequences}
\label{sec:rigidity}

\subsection{The sharp quadratic case}

The case $t=2$ admits a particularly elementary rigidity proof that does not use the general quadrature machinery.

\begin{theorem}[Sharp twisted coherence for $2$-designs]\label{thm:t2}
Let $X=\{x_1,\ldots,x_N\}\subset\Sph^{d-1}$ be a spherical $2$-design and $U\in O(d)$.  Then
\begin{equation}\label{eq:t2coherence}
 \max_{i,j}\ip{Ux_i}{x_j}\ge\frac1d.
\end{equation}
If equality holds, then $N=d+1$, $X$ is a regular simplex, and there is a permutation $\pi$ such that $Ux_i=-x_{\pi(i)}$ for every $i$.
\end{theorem}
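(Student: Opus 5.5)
For each fixed $i$ we work with the row $b_{ij}=\ip{Ux_i}{x_j}$ and use only the degree-one and degree-two design identities \eqref{eq:firstsecond}. Since $Ux_i$ is a unit vector, these give
\[
 \sum_j b_{ij}=0,\qquad \sum_j b_{ij}^2=\frac Nd .
\]
Put $\alpha:=\max_{i,j}b_{ij}$, and suppose for contradiction that $\alpha<1/d$. Every entry lies in $[-1,\alpha]$, so $(\alpha-b_{ij})(1+b_{ij})\ge0$ for all $j$. Summing over $j$ and inserting the two moments gives
\[
 0\le N\alpha+(\alpha-1)\cdot 0-\frac Nd ,
\]
that is, $\alpha\ge 1/d$. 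This is exactly the Gauss--Radau argument of \cref{thm:FLselfcontained} with $n=1$, written out by hand, and it proves \eqref{eq:t2coherence}.

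For equality, assume $\alpha=1/d$. Then every summand must vanish, so each $b_{ij}$ lies in $\{-1,1/d\}$. The mean-zero condition then fixes the multiplicities in each row: there are $N/(d+1)$ entries equal to $-1$ and $Nd/(d+1)$ entries equal to $1/d$.

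In particular each row contains at least one $-1$, so for each $i$ there is some $j$ with $x_j=-Ux_i$. This means $-UX\subseteq X$, and since both sets have $N$ elements, $-UX=X$. Hence there is a permutation $\pi$ with $Ux_i=-x_{\pi(i)}$, which is the last claim. We then substitute back:
\[
 b_{ij}=-\ip{x_{\pi(i)}}{x_j},
\]
so the inner products $\ip{x_k}{x_j}$ take only the values $1$ and $-1/d$. An inner product equal to $1$ forces $x_j=x_k$, because the points are distinct unit vectors. Therefore the row multiplicity of the value $1$ is one, giving $N/(d+1)=1$, so $N=d+1$. Every pair of distinct points then has inner product $-1/d$, and $X$ is a regular simplex.

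The step that needs care is the passage from the one-dimensional equality case back to the geometry. We have to note that $-1$ really occurs in each row, which comes from the multiplicity $N/(d+1)>0$. We also have to use distinctness of the design nodes to pin that multiplicity to exactly one. Everything else is a direct specialization of the quadrature argument already given.
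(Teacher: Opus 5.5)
Your proof is correct and follows essentially the paper's route. You use the same quadratic certificate $(\alpha-b_{ij})(1+b_{ij})\ge0$ averaged against the first two design moments, the same two-value equality pattern with $N=(d+1)k$, and distinctness of the nodes to force $k=1$. The differences are minor: you take the global maximum (so row-wise equality is immediate), you make the permutation $\pi$ explicit via $-UX=X$, and you read off the inner products $-1/d$ from the substituted Gram entries instead of invoking the Gram matrix of a $(d+1)$-point $2$-design. The opening ``suppose for contradiction'' is never used and can be dropped.
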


\begin{proof}
Fix $i$ and write $b_j=\ip{Ux_i}{x_j}$ and $\alpha=\max_j b_j$.  The design identities give
\[
 \frac1N\sum_jb_j=0,
 \qquad
 \frac1N\sum_jb_j^2=\frac1d.
\]
Because $-1\le b_j\le\alpha$,
\[
 (b_j+1)(\alpha-b_j)\ge0,
\]
so
\[
 b_j^2\le(\alpha-1)b_j+\alpha.
\]
Averaging yields $1/d\le\alpha$.  Thus \eqref{eq:t2coherence} holds.

If equality holds globally, equality holds in every row inequality, hence every $b_j$ belongs to $\{-1,1/d\}$.  If a row contains $k$ occurrences of $-1$, the mean-zero equation gives
\[
 -k+\frac{N-k}{d}=0,
 \qquad\text{so}\qquad N=(d+1)k.
\]
For a fixed unit vector $Ux_i$, there can be at most one design point with inner product $-1$, because that point must equal $-Ux_i$.  Thus $k=1$ and $N=d+1$.  A spherical $2$-design with $d+1$ points has Gram matrix
\[
 G=\frac{d+1}{d}\left(\Id-\frac1{d+1}\mathbf1\mathbf1^T\right),
\]
so all off-diagonal inner products are $-1/d$; hence $X$ is a regular simplex.  \qedhere
\end{proof}

\begin{corollary}[Quadratic self-polar equality]\label{cor:t2selfpolar}
In the setting of \cref{thm:quadcomb} with $t=2$,
\[
 \frac rR\ge\frac1d.
\]
Equality holds only for the regular simplex, in which case each facet has $d$ vertices and the Gauss--Radau weights are
\[
 \frac1{d+1}\quad\text{at }-1,
 \qquad
 \frac d{d+1}\quad\text{at }\frac1d.
\]
\end{corollary}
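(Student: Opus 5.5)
The plan is to specialize \cref{thm:quadcomb} to $t=2$ and then use \cref{thm:t2} for the equality case. In the node-transitive self-polar setting, \cref{prop:alpha} gives $r/R=\alpha=\max_j\ip{Ux_i}{x_j}$ for every $i$. Since $X$ is a $2$-design, \cref{thm:t2} (or equivalently \eqref{eq:rRbound} with $\eta_{2,d}=1/d$ from \eqref{eq:smallthresholds}) yields $r/R\ge 1/d$.

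For equality, note that if $r/R=1/d$, then every row maximum equals $1/d$, so the global maximum $\max_{i,j}\ip{Ux_i}{x_j}$ is $1/d$. The equality clause of \cref{thm:t2} then gives three conclusions:
\begin{itemize}
\item $N=d+1$;
\item $X$ is a regular simplex;
\item $Ux_i=-x_{\pi(i)}$ for some permutation $\pi$.
\end{itemize}
Consequently each row of twisted inner products is $\{-\ip{x_{\pi(i)}}{x_j}\}_j$. This row contains exactly one entry $-1$ (at $j=\pi(i)$) and $d$ entries $1/d$, because distinct simplex vertices have inner product $-1/d$.

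By \cref{prop:alpha}, the vertices on $F_i$ are exactly those $y_j$ with $\ip{Ux_i}{x_j}=\alpha=1/d$. So each facet has $d$ vertices, consistent with $P_X$ being a simplex. The row measure is therefore $\frac1{d+1}\delta_{-1}+\frac d{d+1}\delta_{1/d}$.

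It remains to identify these numbers as the Gauss--Radau weights. For $t=2$ we have $n=1$, and the equality case of \cref{thm:FLselfcontained} says the nodes are $-1$ and $\eta_{2,d}=1/d$, with weights uniquely determined by exactness through degree $2$. As an independent check, the two constraints $\lambda_0+\lambda_1=1$ and $-\lambda_0+\lambda_1/d=0$ give $\lambda_0=1/(d+1)$ and $\lambda_1=d/(d+1)$, with second moment $\frac1{d+1}+\frac{d}{d+1}\cdot\frac1{d^2}=\frac1d$ as required.

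One should also confirm that the regular simplex actually attains equality. For $X$ a regular simplex, $P_X$ is a regular simplex with $r/R=1/d$, and it is self-polar up to scale with $U=-\Id$. This can be seen through \cref{prop:constant-h}: $C^\circ$ is a dilate of $-C$.

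The main potential subtlety is passing from the row-wise equality $r/R=1/d$ to the hypothesis of \cref{thm:t2}, which requires the global maximum over $i,j$ to equal $1/d$. This holds because \cref{prop:alpha} makes all row maxima equal to $\alpha$.
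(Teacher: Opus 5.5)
Your proposal is correct and matches the paper's implicit argument: \cref{prop:alpha} identifies $r/R$ with every row maximum, so both the bound and the equality case follow from \cref{thm:t2}. The row distribution, facet size $d$ and Radau weights $(1/(d+1),d/(d+1))$ then follow from $Ux_i=-x_{\pi(i)}$ and the simplex Gram matrix, and attainment is the simplex computation with $U=-\Id$ and $\alpha=1/d$ in \cref{sec:regular}.
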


\subsection{Simple self-polarity}

\begin{theorem}[Simple self-polar rigidity]\label{thm:simple}
Let $d\ge3$ and let $X\subset\Sph^{d-1}$ be a spherical $t$-design, $t\ge2$.  If its Minkowski polytope $P_X$ is simple and self-polar up to scale, then $P_X$ is a $d$-simplex, $N=d+1$, and $X$ is a regular simplex.  In particular, the design strength is at most two.
\end{theorem}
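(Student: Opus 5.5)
Since $P:=P_X$ is self-polar up to scale, $P^\circ=cUP$ is combinatorially equivalent to $P$. Polarity also reverses the face lattice. So if $P$ is simple, then $P^\circ$ is simplicial, and therefore $P$ itself is simplicial. The key combinatorial fact I would invoke is classical: for $d\ge3$, a $d$-polytope that is both simple and simplicial is a $d$-simplex. To see this, take a vertex $v$. Simplicity gives exactly $d$ edges at $v$, hence exactly $d$ neighbours $w_1,\dots,w_d$. Any two edges at a vertex of a simple polytope span a $2$-face. Since every $2$-face of a simplicial polytope with $d\ge3$ is a triangle, the edges $vw_a$ and $vw_b$ lie in a triangle, so $w_a$ and $w_b$ are adjacent. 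Thus $\{v,w_1,\dots,w_d\}$ is closed under adjacency. Connectedness of the graph of a polytope then forces $P$ to have exactly $d+1$ vertices, so $P$ is a $d$-simplex. This is the step where $d\ge3$ is used; for $d=2$ every polygon is both simple and simplicial.

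With $P$ a simplex, $P$ has $d+1$ facets. By \cref{thm:minkowski}, $N=d+1$. The rest of the argument uses only the $2$-design property. For $N=d+1$, the moment identities \eqref{eq:firstsecond} give $X\mathbf1=0$ and $XX^T=\frac{d+1}{d}\Id_d$. Hence $X^TX$ is $\frac{d+1}{d}$ times the orthogonal projector onto $\mathbf1^\perp$. This is exactly the Gram matrix written in the proof of \cref{thm:t2}, so all off-diagonal inner products equal $-1/d$ and $X$ is a regular simplex.

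For the final claim I would show directly that $N=d+1$ rules out strength $3$. One route is the degree-$3$ moment in a direction $u=x_1$. The values $\ip{x_1}{x_j}$ are $1$ once and $-1/d$ exactly $d$ times. Their third moment is
\[
\frac{1}{d+1}\Bigl(1-\frac{1}{d^2}\Bigr)=\frac{d-1}{d^2}\ne0,
\]
whereas the corresponding moment of $\mu_d$ vanishes by symmetry. An alternative is the Delsarte--Goethals--Seidel lower bound $N\ge2d$ for $3$-designs, which contradicts $N=d+1$ when $d\ge2$. Either way the strength is at most two.

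I expect the main obstacle to be the combinatorial step: carefully justifying that self-polarity up to scale transfers simplicity to simpliciality, and that simple plus simplicial forces a simplex. The remaining steps are routine applications of \cref{thm:minkowski}, the moment identities, and the Gram computation already carried out in \cref{thm:t2}.
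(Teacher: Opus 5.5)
Your proof is correct and follows the paper's route: self-polarity up to scale transfers simplicity to simpliciality, simple plus simplicial in $d\ge3$ forces a simplex so $N=d+1$, the $2$-design moments give the regular-simplex Gram matrix, and the third moment at $u=x_1$ is nonzero. The only differences are that you prove the simple-plus-simplicial lemma directly by your vertex-graph argument where the paper cites Ziegler, and you offer the Delsarte--Goethals--Seidel bound $N\ge2d$ as an alternative ending. Your value $(d-1)/d^2$ is the same as the paper's $(1-d^{-2})/(d+1)$.
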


\begin{proof}
The polar of a simple polytope is simplicial.  Self-polarity makes $P_X$ combinatorially equivalent to $P_X^\circ$, so $P_X$ is also simplicial.  A $d$-polytope with $d\ge3$ that is both simple and simplicial is a simplex; see Ziegler \cite{Ziegler1995}.  Hence $N=d+1$.  The first two design moments then give the regular-simplex Gram matrix exactly as in the proof of \cref{thm:t2}.

A regular simplex is not a spherical $3$-design.  Indeed, choosing $a=x_1$,
\[
 \frac1{d+1}\sum_{i=1}^{d+1}\ip{x_i}{a}^3
 =\frac1{d+1}\left(1-d\frac1{d^3}\right)
 =\frac{1-d^{-2}}{d+1}>0,
\]
whereas the spherical integral of the odd cubic is zero.  \qedhere
\end{proof}

\begin{corollary}
If $t\ge3$, every self-polar Minkowski polytope of a spherical $t$-design in dimension $d\ge3$ is necessarily non-simple.  Thus high-strength self-polarity forces nongeneric facet--vertex incidences.
\end{corollary}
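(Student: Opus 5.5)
The corollary follows directly from \cref{thm:simple} by contraposition, together with its final assertion that the design strength is at most two. I would argue as follows.

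Let $t\ge3$ and $d\ge3$. Let $X\subset\Sph^{d-1}$ be a spherical $t$-design whose Steiner-normalized Minkowski polytope satisfies $P_X^\circ=cUP_X$. Suppose, for contradiction, that $P_X$ is simple. Since $t\ge3\ge2$, the hypotheses of \cref{thm:simple} hold, so $P_X$ is a $d$-simplex, $N=d+1$, and $X$ is a regular simplex. But \cref{thm:simple} also shows, via the explicit cubic test with $a=x_1$, that a regular simplex is not a spherical $3$-design. Every $t$-design with $t\ge3$ is a $3$-design, because the defining identity \eqref{eq:design} for degree $\le t$ includes degree $\le3$. This is a contradiction, so $P_X$ is non-simple.

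For the interpretive sentence, I would make ``nongeneric incidences'' precise. Non-simplicity means some vertex $y_j$ lies on more than $d$ facets, so column $j$ of the slack matrix $A$ from \cref{thm:slack} has more than $d$ zeros. Dually, the combinatorial self-polarity used in the proof of \cref{thm:simple} shows that $P_X$ is also non-simplicial, so some facet contains more than $d$ vertices. Either formulation expresses a non-generic facet--vertex incidence pattern.

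There is no real obstacle here. The only point needing care is that ``self-polar Minkowski polytope'' refers to the self-polarity \eqref{eq:selfpolar} up to scale and orthogonal map, which is exactly the hypothesis of \cref{thm:simple}.
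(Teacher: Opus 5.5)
Your argument is correct and is the intended one: the corollary is the contrapositive of \cref{thm:simple}, using that a $t$-design with $t\ge3$ is in particular a $3$-design, while the regular simplex is not. Your precise reading of ``nongeneric incidences'' is also correct: non-simplicity gives a column of $A$ with more than $d$ zeros, and by combinatorial self-polarity some facet contains more than $d$ vertices.
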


\subsection{A complete node-transitive obstruction in dimension three}

\begin{theorem}[Three-dimensional node-transitive rigidity]\label{thm:3d}
Let $X\subset\Sph^2$ be a node-transitive spherical $t$-design, $t\ge2$, and let $P_X\subset\R^3$ be its Steiner-normalized Minkowski polytope.  If $P_X$ is self-polar up to scale, then $P_X$ is a tetrahedron and $X$ is a regular tetrahedral $2$-design.  Consequently,
\begin{equation}\label{eq:no3d}
 \boxed{t\ge3\quad\Longrightarrow\quad P_X\text{ is not self-polar}.}
\end{equation}
\end{theorem}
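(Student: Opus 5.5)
We argue as the introduction indicates. By \cref{prop:constant-h}, node-transitivity gives a constant support number $h$ and $P_X=hC^\circ$ with $C=\conv X$. The group $G\le O(d)$ acting transitively on $X$ preserves $C$, hence preserves $P_X$. Self-polarity $P_X^\circ=cUP_X$ and \cref{thm:bidual} show that $P_X$ has exactly $N$ vertices $y_j=(ch)^{-1}U^Tx_j$. Moreover $P_X^\circ=h^{-1}C$ has vertex set $h^{-1}X$, on which $G$ acts transitively. Since $\Verts(P_X)=c^{-1}U^{-1}\Verts(P_X^\circ)$, the conjugate group $U^{-1}GU$ is a group of symmetries of $P_X$ and acts transitively on its vertices. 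Thus $P_X$ is a vertex-transitive and facet-transitive $3$-polytope with $V=F=N$.

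Next we use Euler's formula. Let every vertex have degree $q$; this is constant by vertex-transitivity. Let every facet have $p$ edges; this is constant by facet-transitivity, via the action of $G$ on the facet normals $x_i$. Counting edges gives $2E=qV=pF$. Since $V=F$, we get $p=q$. Euler's formula $V-E+F=2$ becomes $2N-pN/2=2$, so $N(4-p)=4$. As $p\ge3$, this forces $p=3$ and $N=4$. Hence $P_X$ has four facets and is a tetrahedron. Since $N=d+1=4$, the first two design moments give the regular-simplex Gram matrix exactly as in the proof of \cref{thm:t2}, so $X$ is a regular tetrahedron. The cubic computation in the proof of \cref{thm:simple} shows that $X$ is not a $3$-design, which gives \eqref{eq:no3d}.

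The main obstacle is to justify transitivity on \emph{vertices} of $P_X$, as opposed to facets. The delicate point is that the vertex labelling in \cref{thm:bidual} is a relabelling, so we must check that the conjugated group $U^{-1}GU$ really preserves $P_X$. This holds because $U^{-1}GU$ preserves $U^{-1}P_X^\circ=cP_X$, and $G$ preserves $P_X^\circ=h^{-1}C$. One more point needs care: the facet count $p$ is constant because $G$ maps facets to facets, and $G$ fixes $P_X$ since the Steiner point is equivariant. This is exactly what the proof of \cref{prop:constant-h} establishes.
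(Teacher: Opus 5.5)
Your proof is correct and follows essentially the same route as the paper: vertex-transitivity of $P_X$ via the conjugated group (which you check explicitly preserves $P_X$), Euler's formula with $f_0=f_2=N$, and then the regular-simplex Gram argument together with the cubic moment computation. The facet-transitivity step giving $p=q$ is harmless but not needed, since vertex-transitivity alone already gives $qN=2f_1=4N-4$, i.e.\ $N(4-q)=4$.
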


\begin{proof}
Self-polarity gives
\[
 f_0(P_X)=f_2(P_X)=N.
\]
By \cref{prop:constant-h,thm:bidual}, all vertices have the form
$RU^Tx_j$ with the common circumradius $R$.  If $G$ is transitive on $X$, the conjugate group $U^TGU$ acts transitively on the vertex set, so $P_X$ is vertex-transitive.  Therefore every vertex has the same graph degree, say $q$.

Euler's relation and $f_0=f_2=N$ give
\[
 N-f_1+N=2,
 \qquad\text{hence}\qquad f_1=2N-2.
\]
Counting vertex--edge incidences,
\[
 qN=2f_1=4N-4,
\]
so
\[
 q=4-\frac4N.
\]
Since $q$ is an integer and $N\ge4$, necessarily $N=4$ and $q=3$.  Hence $P_X$ is a tetrahedron.  A spherical $2$-design with four points in $\Sph^2$ is a regular tetrahedron, and \cref{thm:simple} shows that its strength cannot exceed two.  \qedhere
\end{proof}

\section{Regular examples: the simplex and the $24$-cell}
\label{sec:regular}

The preceding results become especially transparent for vertex sets of regular convex polytopes.  We use the classical classification; see Coxeter \cite{Coxeter1973}.

\begin{theorem}[Regular self-polar classification in the design setting]\label{thm:regularclass}
Let $d\ge3$ and let $X$ be the vertex set of a regular convex $d$-polytope $C$ which is also a spherical design of strength at least two.  Let $P_X$ be its Minkowski polytope.  Then $P_X$ is self-polar up to scale if and only if $C$ is self-dual up to similarity.  Consequently, among regular convex polytopal designs in dimensions $d\ge3$, the self-polar Minkowski examples are precisely:
\begin{enumerate}[label=\textup{(\roman*)}]
\item the regular simplex, in every dimension;
\item the regular $24$-cell, in dimension four.
\end{enumerate}
\end{theorem}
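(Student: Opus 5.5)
The plan is to reduce everything to \cref{prop:constant-h}. The vertex set $X$ of a regular polytope $C$ is an orbit of its symmetry group $G\le O(d)$, so $X$ is node-transitive. After centering $C$ at the origin, \cref{prop:constant-h} gives $P_X=hC^\circ$ and $P_X^\circ=h^{-1}C$. Hence $P_X^\circ=cUP_X$ holds if and only if $C=chU C^\circ$. Equivalently, $C^\circ$ is orthogonally similar to $C$, which is exactly self-polarity of $C$ up to scale and orthogonal transformation; this is the last clause of \cref{prop:constant-h}.

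The next step is to pass from polar self-similarity to self-duality in the combinatorial sense. One direction is immediate: if $C^\circ$ is similar to $C$, then the face lattice of $C$ is anti-isomorphic to itself, so $C$ is self-dual and its Schläfli symbol is a palindrome. For the converse, I will use the fact that for a regular polytope centered at the origin, the polar $C^\circ$ is again regular, with the reversed Schläfli symbol and the same symmetry group. Two regular polytopes with the same Schläfli symbol are similar; they can be moved to one another by an orthogonal map composed with a dilation, by the uniqueness of regular realizations from flags. So if the symbol is a palindrome, $C^\circ$ is similar to $C$.

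It remains to run through Coxeter's classification in dimensions $d\ge3$ and read off the palindromic Schläfli symbols.
\begin{itemize}
\item The simplex $\{3,\dots,3\}$ is palindromic in every dimension.
\item The cross-polytope and the cube, $\{3,\dots,3,4\}$ and $\{4,3,\dots,3\}$, are dual to each other and not self-dual.
\item In $d=3$, the icosahedron $\{3,5\}$ and dodecahedron $\{5,3\}$ are dual to each other.
\item In $d=4$, the $600$-cell $\{3,3,5\}$ and $120$-cell $\{5,3,3\}$ are dual to each other, while the $24$-cell $\{3,4,3\}$ is palindromic.
\end{itemize}
For $d\ge5$ only the simplex, cube and cross-polytope exist. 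Every regular polytope here has the design property with $t\ge2$: its symmetry group acts irreducibly, which forces the second moment to be isotropic, and central symmetry or the balancing of the orbit gives the first moment. So the hypothesis excludes nothing, and the list is exactly (i)–(ii).

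The main obstacle is the second step: making rigorous that a combinatorially self-dual regular polytope has a polar that is orthogonally similar to itself, rather than merely projectively equivalent. I would handle this through the flag argument above, since regularity pins down the realization up to similarity. Alternatively, I would check the two cases directly. For the simplex, $C^\circ=-dC$ for a regular simplex inscribed in the unit sphere. For the $24$-cell, with vertices the $24$ permutations of $(\pm1,\pm1,0,0)$, the polar has vertices $(\pm1,0,0,0)$ and $(\pm\frac12,\pm\frac12,\pm\frac12,\pm\frac12)$ and their permutations. This is a rotated and rescaled copy of $C$, which can be checked directly.
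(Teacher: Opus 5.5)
Your proposal is correct and follows the paper's route. Vertex-transitivity together with \cref{prop:constant-h} reduces self-polarity of $P_X$ to $C^\circ$ being orthogonally similar to $C$, and Coxeter's classification then leaves exactly the simplex and the $24$-cell. Your Schl\"afli-symbol/flag argument, and the direct checks ($C^\circ=-dC$ and the explicit $24$-cell polar), make rigorous the step from combinatorial self-duality to metric self-polarity, which the paper's proof takes for granted. The only slip is harmless: $h^{-1}C=chUC^\circ$ gives $C=ch^2UC^\circ$ rather than $C=chUC^\circ$, and the scale is free anyway.
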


\begin{proof}
The regular symmetry group is vertex-transitive, so \cref{prop:constant-h} gives $P_X=hC^\circ$.  Hence self-polarity of $P_X$ is equivalent to self-duality of $C$ up to orthogonal similarity.  The classification of regular convex polytopes says that the simplex is self-dual in every dimension, the cube and cross-polytope form a dual pair, and in dimension four the $24$-cell is the unique additional self-dual regular polytope; the $120$-cell and $600$-cell form a dual pair.  \qedhere
\end{proof}

\subsection{The simplex}

Let $X=\{x_1,\ldots,x_{d+1}\}$ be a regular simplex on $\Sph^{d-1}$, so
\[
 \ip{x_i}{x_j}=-\frac1d\quad(i\ne j).
\]
Then
\[
 C^\circ=-dC.
\]
Thus for $P=hC^\circ$ one has
\[
 P^\circ=-\frac1{d h^2}P.
\]
The self-polar map may be chosen $U=-\Id$, and
\[
 \alpha=ch^2=\frac1d=\eta_{2,d}.
\]
This is exactly the equality case of \cref{cor:t2selfpolar}.

\subsection{The exceptional degree-five equality of the $24$-cell}

Consider the standard unit-sphere realization
\begin{equation}\label{eq:24set}
 X_{24}
 =\{\pm e_1,\ldots,\pm e_4\}
 \cup
 \left\{\frac12(\varepsilon_1,\varepsilon_2,\varepsilon_3,\varepsilon_4):
 \varepsilon_i\in\{\pm1\}\right\}
 \subset\Sph^3.
\end{equation}
This is the vertex set of a regular $24$-cell and is a spherical $5$-design; see, for example, Xiang \cite{Xiang2022}.  The $24$-cell is self-dual.  Its attainment of the relevant Fazekas--Levenshtein covering threshold is also visible in the spherical-polarization analysis of \cite{BoyvalenkovEtAl2023}; below we reinterpret the equality multiplicities as facet--vertex incidence numbers.

Define the orthogonal involution
\begin{equation}\label{eq:U24}
 U=\frac1{\sqrt2}
 \begin{pmatrix}
 1&1&0&0\\
 1&-1&0&0\\
 0&0&1&1\\
 0&0&1&-1
 \end{pmatrix}.
\end{equation}
Then $UX_{24}$ is the $D_4$ root realization
\[
 \left\{\frac1{\sqrt2}(\pm e_i\pm e_j):1\le i<j\le4\right\},
\]
which is the dual $24$-cell realization.  In fact, if $C=\conv X_{24}$, then
\begin{equation}\label{eq:24dual}
 C^\circ=\sqrt2\,UC.
\end{equation}

For $x=e_1$, one has
\[
 Ue_1=\frac1{\sqrt2}(1,1,0,0).
\]
A direct count in \eqref{eq:24set} gives
\begin{equation}\label{eq:24distribution}
 \#\left\{y\in X_{24}:\ip{Ue_1}{y}=\frac1{\sqrt2}\right\}=6,
\end{equation}
\[
 \#\{y:\ip{Ue_1}{y}=0\}=12,
 \qquad
 \#\left\{y:\ip{Ue_1}{y}=-\frac1{\sqrt2}\right\}=6.
\]
By regular symmetry the same pattern occurs in every corresponding row.

Since $d=4$ and $t=5$,
\[
 \eta_{5,4}=\sqrt{\frac3{6}}=\frac1{\sqrt2},
\]
and \eqref{eq:t5weights} becomes
\[
 (\lambda_-,\lambda_0,\lambda_+)=(1/4,1/2,1/4).
\]
Thus \eqref{eq:24distribution} is exactly the Gaussian equality distribution predicted by \cref{thm:quadcomb}.  The top multiplicity is
\[
 N\lambda_+=24\cdot\frac14=6,
\]
which is precisely the number of vertices of an octahedral facet of the $24$-cell.

\begin{proposition}[The $24$-cell saturates the self-polar design bound]\label{prop:24}
For the Minkowski polytope associated with $X_{24}$,
\[
 \frac rR=\eta_{5,4}=\frac1{\sqrt2}.
\]
Every facet row realizes the three-node Gaussian rule with weights $(1/4,1/2,1/4)$, and every facet has six vertices.
\end{proposition}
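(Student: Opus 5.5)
The plan is to reduce everything to \cref{prop:constant-h}, \cref{prop:alpha} and \cref{thm:quadcomb}, with the only real input being the concrete identity \eqref{eq:24dual} and the row count \eqref{eq:24distribution}.

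The first step is to show that $P_X$ is self-polar with the explicit $U$ of \eqref{eq:U24}. Since the symmetry group of the $24$-cell is transitive on $X_{24}$, \cref{prop:constant-h} gives $P_X=hC^\circ$ and $P_X^\circ=h^{-1}C$. It therefore suffices to verify \eqref{eq:24dual}, i.e.\ that $C^\circ=\conv\{\sqrt2\,Ux:x\in X_{24}\}$. I would prove this by checking that the facet-defining inequalities of $C$ are $\ip{w}{y}\le 1/\sqrt2$ with $w\in UX_{24}$, the $D_4$ root directions. Concretely, $\max_{y\in X_{24}}\ip{(e_i\pm e_j)/\sqrt2}{y}=1/\sqrt2$. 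These $24$ hyperplanes are the octahedral facets of $C$: each contains $6$ vertices spanning a $3$-flat, and $24$ is the facet count of the $24$-cell. By \eqref{eq:polarvertices}, the vertices of $C^\circ$ are then $\sqrt2\,Ux$. Combining the two pieces, $P_X^\circ=h^{-1}C=h^{-1}(\sqrt2)^{-1}U^{-1}C^\circ=\tfrac{1}{\sqrt2\,h^2}U^{-1}P_X$. Since $U$ is a symmetric involution, $U^{-1}=U$, and so $P_X^\circ=cUP_X$ with $c=1/(\sqrt2 h^2)$.

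The second step computes the incidence level. From \eqref{eq:alpha}, $r/R=\alpha=ch^2=1/\sqrt2$. As a consistency check, \cref{prop:alpha} says $\alpha=\max_j\ip{Ux_i}{x_j}$, and the computation above shows this maximum equals $1/\sqrt2$ for every $i$. By \eqref{eq:smallthresholds}, $\eta_{5,4}=\sqrt{3/6}=1/\sqrt2$, so equality holds in \eqref{eq:rRbound}.

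The third step applies the equality case of \cref{thm:quadcomb}. With $t=5$, $d=4$ and equality, every row multiset $\{\ip{Ux_i}{x_j}\}_j$ must be the three-node Gaussian rule supported on $\{-1/\sqrt2,0,1/\sqrt2\}$ with weights given by \eqref{eq:t5weights}, namely $(1/4,1/2,1/4)$. The facet size is then $N\lambda_+=6$ by \eqref{eq:facetsize}. The direct count \eqref{eq:24distribution} for $i$ with $x_i=e_1$ serves as an independent verification; the theorem makes the count for the other rows automatic, with no appeal to symmetry needed.

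The main obstacle is the first step: confirming the facet description of $C$, i.e.\ that the $D_4$ directions give exactly the irredundant facet normals with support value $1/\sqrt2$. For this I would do a direct case check over the two vertex types, $\pm e_k$ and $\tfrac12\varepsilon$. For $w=(e_i\pm e_j)/\sqrt2$, the vertices attaining $1/\sqrt2$ are $e_i$, $\pm e_j$ (one sign), and the four half-vectors with the appropriate signs in positions $i,j$, giving $6$ in total. Full dimensionality of these faces follows from the $6$ points affinely spanning a $3$-flat. Irredundance and completeness of this facet list then follow from the facet count $24$ of the $24$-cell \cite{Coxeter1973}.
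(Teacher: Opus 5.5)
Your proposal is correct. For the incidence level it follows the paper: both obtain $r/R=1/\sqrt2$ from \eqref{eq:24dual} and \cref{prop:constant-h}. You do more than the paper at this step. The paper simply asserts \eqref{eq:24dual}. You verify it: the support value is $1/\sqrt2$ in each direction of $UX_{24}$, each such hyperplane contains six vertices spanning a $3$-flat, and completeness follows from the facet count $24$. You also make $c=1/(\sqrt2\,h^2)$ explicit using $U=U^T=U^{-1}$.

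The genuine difference is in how you get the row distributions. The paper counts the row of $e_1$ directly, \eqref{eq:24distribution}, and extends to the other rows ``by regular symmetry.'' You instead note that once $r/R=\eta_{5,4}$ is established, the equality clause of \cref{thm:quadcomb} forces two things. Every row must be the Gaussian rule $(1/4,1/2,1/4)$, and every facet must have $N\lambda_+=6$ vertices. The direct count then serves only as a check.

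Your route makes the proposition a genuine corollary of the rigidity theorem. It also removes the row-by-row symmetry step, though node-transitivity is still used through \cref{prop:constant-h}. Done carefully, that step needs symmetries of $X_{24}$ acting transitively on the twisted directions $UX_{24}$, not merely on $X_{24}$. Signed coordinate permutations suffice for this.

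The paper's route is independent of the quadrature machinery. The $24$-cell therefore serves as an independent confirmation that the equality pattern of \cref{thm:quadcomb}, with the weights \eqref{eq:t5weights}, is actually realized. That is the point of presenting it as an example, rather than deriving it from the theorem.
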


\begin{proof}
By \eqref{eq:24dual} and \cref{prop:constant-h}, the self-polar incidence level is $1/\sqrt2$.  The explicit distribution above completes the proof.  \qedhere
\end{proof}

\begin{remark}
The $24$-cell illustrates why \cref{thm:simple} is not a nonexistence theorem for high-strength self-polarity.  The $24$-cell is neither simple nor simplicial; it occupies the exceptional combinatorial regime in which strength five and self-duality coexist.  The Gaussian quadrature pattern explains quantitatively how its octahedral facets fit the design moments.
\end{remark}

\section{Further directions}
\label{sec:future}

The results above suggest a program in which polar duality is treated as a structural operation on spherical designs rather than merely on their underlying polytopes.  We record several problems that appear particularly natural.

\begin{problem}[Classification of self-polar Minkowski design polytopes]
Classify spherical $t$-designs $X\subset\Sph^{d-1}$ for which the Steiner-normalized Minkowski polytope satisfies $P_X^\circ=cUP_X$.  The three-dimensional node-transitive case is settled by \cref{thm:3d}, while \cref{thm:regularclass} settles regular convex polytopal designs.  The first genuinely open regime is therefore nonregular, non-simple self-polarity in dimension $d\ge4$.
\end{problem}

\begin{problem}[Equality in the quadrature-combinatorics bound]
Classify node-transitive self-polar design polytopes satisfying
\[
 r/R=\eta_{t,d}.
\]
By \cref{thm:quadcomb}, every facet row then realizes a Gaussian or Gauss--Radau quadrature rule exactly.  The integrality constraints $N\lambda_k\in\mathbb Z$ provide immediate arithmetic obstructions.  Determine whether the regular simplex and the $24$-cell are isolated equality models in natural classes of designs.
\end{problem}

\begin{problem}[Uniform-in-strength stability and classification]
\Cref{thm:stable-quadrature,thm:near-incidence} give local stability for each fixed pair $(d,t)$, with constants depending on the conditioning and separation of the relevant quadrature nodes.  Determine sharp dependence on $t$ for fixed dimension $d$, and use it to classify sequences of self-polar Minkowski design polytopes satisfying
\[
 \frac rR-\eta_{t,d}\longrightarrow0
 \qquad (t\to\infty).
\]
In particular, determine whether the near-incidence clusters forced by \cref{thm:near-incidence} can be upgraded, under natural combinatorial hypotheses, to exact incidence rigidity for all sufficiently large $t$.
\end{problem}

\begin{problem}[Nontransitive self-polarity]
In the absence of node transitivity the support numbers $h_i$ are nonconstant and the incidence threshold becomes the rank-one matrix $c h_i h_j$.  The slack identity
\[
 A=c\,hh^T-X^TU^TX\ge0
\]
suggests a weighted one-dimensional moment problem coupled to the geometry of $h$.  Develop sharp inequalities and classification results directly from this structured nonnegative factorization.
\end{problem}

\begin{problem}[Improved Minkowski stability and polar moment windows]
The current universal three-dimensional rate $\dH(P_t,B)=O(t^{-1/2})$ gives the polar isotropy window $m=o(\sqrt t)$.  For special well-conditioned designs, determine whether the wavelength-scale estimate $O(t^{-1})$ holds.  By \cref{thm:momenttransfer}, such an improvement would automatically enlarge the polar moment window to $m=o(t)$.
\end{problem}

The emerging picture may be summarized as
\[
\boxed{
\begin{aligned}
&\text{spherical design}\longrightarrow\text{Minkowski facet geometry}
\longrightarrow\text{polar vertex geometry}\\
&\hspace{2.5cm}\longrightarrow\text{self-polar slack structure}
\longrightarrow\text{quadrature-controlled combinatorics}.
\end{aligned}}
\]
The first arrow is governed by the Minkowski problem, the second by polarity, and the last two by the interaction between spherical moment exactness and polytope duality.  The exceptional behavior of the $24$-cell indicates that this interaction is sensitive not only to harmonic strength but also to the fine incidence geometry of the polytope.

\section*{Acknowledgements}
The author thanks Yeyao Hu for encouragement and helpful discussions.


\begin{thebibliography}{99}

\bibitem{An2026}
C.~An,
\newblock \emph{Minkowski polytopes of spherical designs: High-order isotropy and quantitative sphericity},
\newblock arXiv:2608.11570, 2026.

\bibitem{Borodachov2024}
S.~Borodachov,
\newblock Odd strength spherical designs attaining the Fazekas--Levenshtein bound for covering and universal minima of potentials,
\newblock \emph{Aequationes Math.} \textbf{98} (2024), 509--533.
\newblock doi:10.1007/s00010-024-01036-6.

\bibitem{BoyvalenkovEtAl2023}
P.~G. Boyvalenkov, P.~D. Dragnev, D.~P. Hardin, E.~B. Saff, and M.~M. Stoyanova,
\newblock On polarization of spherical codes and designs,
\newblock \emph{J. Math. Anal. Appl.} \textbf{524} (2023), no.~1, Article 127065.
\newblock doi:10.1016/j.jmaa.2023.127065.

\bibitem{Coxeter1973}
H.~S.~M. Coxeter,
\newblock \emph{Regular Polytopes}, 3rd ed.,
\newblock Dover Publications, New York, 1973.

\bibitem{DGS1977}
P.~Delsarte, J.-M. Goethals, and J.~J. Seidel,
\newblock Spherical codes and designs,
\newblock \emph{Geom. Dedicata} \textbf{6} (1977), 363--388.

\bibitem{Ehler2025}
M.~Ehler,
\newblock \emph{Hybrid spherical designs},
\newblock arXiv:2502.07720, 2025.

\bibitem{FazekasLevenshtein1995}
G.~Fazekas and V.~I. Levenshtein,
\newblock On upper bounds for code distance and covering radius of designs in polynomial metric spaces,
\newblock \emph{J. Combin. Theory Ser. A} \textbf{70} (1995), no.~2, 267--288.
\newblock doi:10.1016/0097-3165(95)90093-4.

\bibitem{GouveiaEtAl2013}
J.~Gouveia, R.~Grappe, V.~Kaibel, K.~Pashkovich, R.~Z.~Robinson, and R.~R.~Thomas,
\newblock Which nonnegative matrices are slack matrices?,
\newblock \emph{Linear Algebra Appl.} \textbf{439} (2013), no.~10, 2921--2933.
\newblock doi:10.1016/j.laa.2013.08.009.

\bibitem{GouveiaLourenco2023}
J.~Gouveia and B.~F. Louren\c{c}o,
\newblock Self-dual polyhedral cones and their slack matrices,
\newblock \emph{SIAM J. Matrix Anal. Appl.} \textbf{44} (2023), no.~3, 1096--1121.
\newblock doi:10.1137/22M1519869.

\bibitem{Jensen2021}
A.~Jensen,
\newblock Self-polar polytopes,
\newblock in \emph{Polytopes and Discrete Geometry}, Contemp. Math. \textbf{764},
\newblock Amer. Math. Soc., Providence, RI, 2021, pp.~101--124.
\newblock doi:10.1090/conm/764/15333.

\bibitem{Schneider2014}
R.~Schneider,
\newblock \emph{Convex Bodies: The Brunn--Minkowski Theory}, 2nd expanded ed.,
\newblock Encyclopedia of Mathematics and its Applications 151, Cambridge University Press, Cambridge, 2014.

\bibitem{Szego1975}
G.~Szeg\H{o},
\newblock \emph{Orthogonal Polynomials}, 4th ed.,
\newblock American Mathematical Society Colloquium Publications, Vol.~23,
\newblock American Mathematical Society, Providence, RI, 1975.

\bibitem{Waldron2025}
S.~Waldron,
\newblock \emph{Real and complex spherical designs and their Gramian},
\newblock arXiv:2511.07452, 2025.

\bibitem{Xiang2022}
Z.~Xiang,
\newblock Explicit spherical designs,
\newblock \emph{Algebraic Combinatorics} \textbf{5} (2022), no.~2, 347--369.
\newblock doi:10.5802/alco.213.

\bibitem{Ziegler1995}
G.~M. Ziegler,
\newblock \emph{Lectures on Polytopes},
\newblock Graduate Texts in Mathematics 152, Springer-Verlag, New York, 1995.

\end{thebibliography}
\end{document}